\numberwithin{equation}{section}
\theoremstyle{definition}
\newtheorem{theorem}{Theorem}[section]
\newtheorem{proposition}[theorem]{Proposition}
\newtheorem{corollary}[theorem]{Corollary}
\newtheorem{lemma}[theorem]{Lemma}
\theoremstyle{definition}
{
\newtheorem{remark}[theorem]{Remark}

}}
\newcommand{\cal}{\mathcal}
\newcommand{\A}{{\cal A}}
\newcommand{\BB}{{\cal B}}
\newcommand{\DD}{{\cal D}}
\newcommand{\FF}{{\cal F}}
\newcommand{\HH}{{\cal H}}
\newcommand{\II}{{\cal I}}
\newcommand{\LL}{{\cal L}}
\newcommand{\RR}{{\cal R}}
\newcommand{\UU}{{\cal U}}
\newcommand{\VV}{{\cal V}}
\newcommand{\YY}{{\cal Y}}
\newcommand{\fU}{{\mathfrak U}}
\newcommand{\Cc}{{\mathbb{C}}}
\newcommand{\Ee}{{\mathbb{E}}}
\newcommand{\Ii}{{\mathbb{I}}}
\newcommand{\Nn}{{\mathbb{N}}}
\newcommand{\Rr}{{\mathbb{R}}}
\newcommand{\Tt}{{\mathbb{T}}}
\newcommand{\Zz}{{\mathbb{Z}}}
\def\e{\mathrm{e}}
\def\i{\mathrm{i}}
\def\diag{\operatorname{diag}}
\def\GL{\operatorname{GL}}
\def\SL{\operatorname{SL}}
\def\SLZ{\SL(d,\Zz)}
\def\SLR{\SL(d,\Rr)}
\def\veck{{\text{\boldmath$k$}}}
\def\vecu{{\text{\boldmath$u$}}}
\def\vecv{{\text{\boldmath$v$}}}
\def\vecx{{\text{\boldmath$x$}}}
\def\vecy{{\text{\boldmath$y$}}}
\def\vecz{{\text{\boldmath$z$}}}
\def\vecalf{{\text{\boldmath$\alpha$}}}
\def\vecbeta{{\text{\boldmath$\beta$}}}
\def\vecomega{{\text{\boldmath$\omega$}}}
\def\vecxi{{\text{\boldmath$\xi$}}}
\def\vecnull{{\text{\boldmath$0$}}}
\def\Im{\operatorname{Im}}
\newcommand{\norm}[1]{\left\| #1\right\|}
\newcommand{\trans} {\,^\top\!}
\newcommand{\id}  {\operatorname{Id}}
\newcommand{\im}  {\operatorname{Im}}
\newcommand{\rot}{\operatorname{Rot}}
\newcommand{\Homeo}{\operatorname{Homeo}} 
\newcommand{\Diff} {\operatorname{Diff}}  
\newcommand{\vf}   {\operatorname{Vect}} 
\newcommand{\te}[1]{\quad\text{#1}\quad}
\begin{document}

\title[Local conjugacy classes]
{Local conjugacy classes for analytic torus flows}

\author[J Lopes Dias]{Jo\~ao Lopes Dias}
\address{Departamento de Matem\'atica, ISEG\\ 
Universidade T\'ecnica de Lisboa\\
Rua do Quelhas 6, 1200-781 Lisboa, Portugal}
\email{jldias@iseg.utl.pt}

\thanks{}
\date{June 8, 2006}

\begin{abstract}
If a real-analytic flow on the multidimensional torus close enough to linear has a unique rotation vector which satisfies an arithmetical condition $\YY$, then it is analytically conjugate to linear. We show this by proving that the orbit under renormalization of a constant $\YY$ vector field attracts all nearby orbits with the same rotation vector.
\end{abstract}
 
\maketitle


\section{Introduction}

We are interested in the study of real-analytic flows $\phi^t$ on the torus $\Tt^d=\Rr^d/\Zz^d$, $d\geq2$, that are topologically conjugate to a linear flow, i.e. $h^{-1}\circ \phi^t\circ h(\vecx)=\vecx+\vecomega t \bmod1$, $t\geq0$, for some homeomorphism $h$ of $\Tt^d$ and $\vecomega\in\Rr^d$. Our aim, as part of a program, is to find sufficient conditions for which the conjugacy $h$ is guaranteed to be real-analytic.

The general case $d\geq2$ differs significantly from the lower dimension situation $d=2$ where there is an invariant of motion, the asymptotic direction of the flow, whose slope is called rotation number.
The classical Denjoy's theorem \cite{Denjoy} asserts that for sufficiently smooth flows, irrational rotation numbers describe completely topological conjugacy classes.
This can not be generalized to higher dimensions due to the large variety of possible dynamical behaviours; often there is more than one asymptotic direction (rotation vectors).
In this paper we look at analytic flows with a unique rotation vector for all orbits, showing that these have similar properties to the ones of $d=2$, at least for 'typical' rotation vectors.

The two dimensional theory was further developed by Arnol'd \cite{Arnold2}, Herman \cite{Herman1979} and Yoccoz \cite{Yoccoz} in the 1960's, 1970's and 1980's, respectively. Mainly working on the discrete-time counterpart, circle diffeomorphisms as return maps to flow transversals, they showed that the conjugacy classes correspond to higher regularity (see also \cite{Katznelson,Katznelson2}). However, this will depend on the arithmetical properties of the rotation number due to their crucial role in solving small divisor problems. In particular, in the 1990's Yoccoz proved that for analytic diffeomorphisms close to a rotation, the conjugacy is analytic if the rotation number is of Brjuno type \cite{Yoccoz2}; and this arithmetical condition is optimal. Moreover, the closeness to rotation hypothesis can be dropped if restricting to Yoccoz's $\HH$ set of rotation numbers \cite{Yoccoz4}. Again, this condition is optimal.

The above cited results can be seen as proofs of differential rigidity within topological conjugacy classes of smooth systems. This has been observed to be a property common to different problems as in e.g. \cite{Khanin03,Faria,Faria2,Faria3}, while renormalization being the tool often used. However, in those works the renormalizations are based on the continued fraction expansion of irrational numbers. Here we are able to work on higher dimensions by making use of the multidimensional continued fractions algorithm introduced in \cite{jld5} (see also \cite{jld9,jld8,Koch-Kocic06}). This roughly corresponds to a flow on the homogeneous space $\SL(d,\Zz)\setminus\SL(d,\Rr)$ that, following Lagarias ideas \cite{Lagarias94}, provides a strongly convergent continued fractions expansion for all vectors (see section \ref{section:Multidimensional continued fractions} below).

Given $\vecomega\in\Rr^d-\{0\}$, define the linear torus flow 
\begin{equation}
R_\vecomega^t\colon  \Tt^d  \to \Tt^d,
\qquad
\vecx \mapsto  \vecx+\vecomega t \bmod1,
\end{equation}
with $t\geq0$.
The main result in this paper is the following.

\begin{theorem}\label{main thm I}
Let $\vecomega\in\YY\subset\Rr^d$.
If a $C^\omega$ flow on $\Tt^d$ has a unique rotation vector $\vecomega$ and close enough to linear, then it is $C^\omega$-conjugate to $R_\vecomega^t$.
\end{theorem}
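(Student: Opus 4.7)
The plan is to prove Theorem~\ref{main thm I} via a renormalization scheme built on the multidimensional continued fractions algorithm cited in the introduction, exactly as the abstract advertises. First I would write the analytic flow $\phi^t$ as the flow of an analytic vector field $X = \vecomega + f$ on $\Tt^d$, where $f$ is a small perturbation in a suitable norm (guaranteed by the ``close enough to linear'' hypothesis). The hypothesis that $\vecomega$ is the unique rotation vector then reads, for every $\vecx$, $\lim_{T\to\infty}\frac{1}{T}\int_0^T X(\phi^s\vecx)\,ds = \vecomega$; crucially this pins down a one-parameter family of modes of $f$ that cannot be absorbed into a simple change of time or frame.

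Next I would set up a renormalization operator $\RR$ acting on a Banach space $\BB$ of analytic vector fields on a complex neighborhood of $\Tt^d$. One step of $\RR$ consists of: a pullback by a unimodular $T_n \in \SL(d,\Zz)$ coming from the continued fractions expansion of $\vecomega$, a time rescaling that keeps the linear part of unit size, and a restriction to a slightly narrower analytic strip to bookkeep the loss of domain. Under the assumption $\vecomega\in\YY$, the orbit $\{\RR^n(\vecomega)\}_{n\geq0}$ of the constant vector field is uniformly bounded in $\BB$, because the arithmetic condition is tailored to control the hyperbolicity and the growth of the matrices $T_n$ appearing in the continued fractions flow on $\SL(d,\Zz)\backslash\SL(d,\Rr)$.

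The core of the argument is then the contraction estimate: if $X=\vecomega+f$ lies in a sufficiently small neighborhood of $\vecomega$ in $\BB$ and has $\vecomega$ as its unique rotation vector, then $\|\RR^n X - \RR^n\vecomega\|_\BB \to 0$ geometrically. I would prove this by splitting $f$ into Fourier modes indexed by $\veck\in\Zz^d$ and separating the \emph{non-resonant} modes (for which $|\vecomega\cdot\veck|$ is comparable to the current scale) from the \emph{resonant} modes. Non-resonant modes contract automatically because the $\SL(d,\Zz)$ action expands the relevant frequency indices and pushes them outside the analytic strip. Resonant modes must be removed by a KAM-style near-identity analytic conjugacy at each step, with the small-divisor losses balanced by the $\YY$ condition. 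The rotation-vector hypothesis enters precisely here: it forces the zero Fourier mode of the obstruction to each cohomological equation to vanish, so the KAM step goes through.

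Finally, the conjugacy $h$ in the statement of the theorem would be obtained as the limit of the infinite composition of the near-identity analytic changes of coordinates produced along the renormalization procedure, combined with the linear $T_n$'s in the appropriate way to land back on $\Tt^d$. The convergence of this composition in an analytic topology follows from the geometric contraction of Step~3. I expect the hard part to be the contraction estimate and, specifically, keeping the analytic strip bounded below along the infinite KAM-like iteration: the loss of domain incurred when removing resonant modes has to be summable, and this summability is exactly the quantitative content of the arithmetic condition $\YY$. Once the contraction is in place, identifying the limit conjugacy with $h$ and verifying $h^{-1}\circ\phi^t\circ h = R_\vecomega^t$ is then largely a bookkeeping exercise.
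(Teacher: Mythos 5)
Your overall strategy matches the paper: renormalization via the multidimensional continued fractions, contraction of a neighborhood of $\vecomega$ to the orbit of the constant vector field under renormalization, and a conjugacy constructed as the limit of an infinite composition of near-identity coordinate changes. However, the assignment of which Fourier modes are handled by which mechanism is backwards, and this is fatal. In the paper the KAM-style uniformization step removes exactly the \emph{far-from-resonance} modes $I^-_{n-1}=\{\veck:|\vecomega_{n-1}\cdot\veck|>\sigma_{n-1}\|\veck\|\}$; there is no small-divisor issue precisely because the divisors are bounded below by $\sigma_{n-1}\|\veck\|$. The linear rescaling by $T_n$ then acts only on the surviving \emph{resonant} modes $I^+_{n-1}$, and the key quantity $A_{n-1}=\sup_{\veck\in I_{n-1}^+-\{0\}}\|\trans T_n^{-1}\veck\|/\|\veck\|$ measures how mildly the resonant cone is distorted, which controls the loss of analyticity strip. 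Your version --- KAM removes resonant modes, rescaling kills non-resonant ones --- fails on both counts. The resonant modes, where $\vecomega\cdot\veck$ is small by definition, cannot be eliminated by an analytic cohomological step with losses controllable under a Brjuno-type arithmetic; avoiding exactly this is what makes renormalization preferable to classical KAM at this level of generality. Conversely, if non-resonant modes survived to the rescaling step their Fourier indices $\trans T_n^{-1}\veck$ would grow by a factor of order $\|T_n^{-1}\|$ rather than $A_{n-1}$, forcing the analyticity strip to shrink by a factor of order $\|T_n^{-1}\|^{-1}$ at each step and collapse to zero.

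Two further points you gloss over. The unique-rotation-vector hypothesis is not used to make the mean obstruction vanish; absent any volume-preservation hypothesis $\Ee X$ need not equal $\vecomega$. Instead the paper proves in Proposition~\ref{prop rot control EX} the inequality $\|\Ee X-\vecomega\|\leq d\|X-\Ee X\|_{C^0}$, and at each renormalization step this allows one to dominate $\|X_n-\vecomega_n\|$ by $(d+1)$ times the oscillating part of $X_n$; the mean is slaved to, not equal to, $\vecomega_n$. Also, convergence of $H_{0,n}$ to an analytic map does not simply ``follow from the contraction'': the domains $D_{R_n}$ shrink to $\Rr^d$, so the natural limit is only a $C^1$ diffeomorphism. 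The paper upgrades this by a separate argument (Theorem~\ref{thm C1 to analytic}): the commutation relation $H(X\circ R_\vecx)=R_\vecx^{-1}\circ H(X)\circ R_\vecx$, together with the analytic dependence of $H$ on the vector field, shows that $\vecz\mapsto\vecz+H(X\circ R_\vecz)(0)$ is an analytic extension of $H(X)$ to a complex strip. You would need this step or an equivalent, and it is not automatic.
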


Since the rotation vector is invariant under homeomorphisms (see section \ref{section:Rotation vectors}), this result yields  an immediate consequence:

\begin{corollary}
Let $\vecomega\in\YY\subset\Rr^d$.
If a $C^\omega$ flow on $\Tt^d$ is $C^0$-conjugate to $R_\vecomega^t$ and it is close enough to linear, then the conjugacy is in fact $C^\omega$.
\end{corollary}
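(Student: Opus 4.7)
The plan is to derive this corollary directly from Theorem~\ref{main thm I}; the only extra input needed is the topological invariance of rotation vectors, announced in the remark preceding the corollary and to be verified in Section~\ref{section:Rotation vectors}. The strategy is to show that, under the hypotheses of the corollary, the hypotheses of Theorem~\ref{main thm I} are automatically satisfied, at which point the conclusion is immediate.

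Concretely, suppose $\phi^t$ is a $C^\omega$ flow on $\Tt^d$ close enough to linear, and let $g$ be a homeomorphism of $\Tt^d$ with $g^{-1}\circ\phi^t\circ g=R_\vecomega^t$ for all $t\geq0$. The first step is to argue that $\phi^t$ has $\vecomega$ as its unique rotation vector. Fixing any lift $\tilde g\colon\Rr^d\to\Rr^d$ of $g$, the difference $\tilde g-\id$ is $\Zz^d$-periodic, hence bounded. For any $\vecy=g(\vecx)\in\Tt^d$ the conjugacy relation lifts to
\begin{equation*}
\tilde\phi^t(\tilde\vecy)-\tilde\vecy
= \tilde g(\tilde\vecx+\vecomega t)-\tilde g(\tilde\vecx)
= \vecomega\, t + O(1),
\end{equation*}
so after dividing by $t$ and sending $t\to\infty$ every orbit of $\phi^t$ has rotation vector $\vecomega$. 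In particular, $\phi^t$ admits the unique rotation vector $\vecomega\in\YY$.

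With this reduction in hand, Theorem~\ref{main thm I} applies to $\phi^t$ and produces a real-analytic homeomorphism $h$ of $\Tt^d$ with $h^{-1}\circ\phi^t\circ h=R_\vecomega^t$, which is precisely the analytic conjugacy asserted by the corollary. The main, and really only, obstacle here is expository: one must verify that the lifting argument above is legitimate under a mere $C^0$-conjugacy assumption and consistent with the convention for rotation vectors adopted in Section~\ref{section:Rotation vectors}. No new small-divisor analysis, renormalization step, or regularity bootstrap is required; the entire analytic content is already packaged inside Theorem~\ref{main thm I}, and the corollary merely uses it together with the invariance of the rotation vector to remove the ``unique rotation vector'' hypothesis in favor of the weaker ``$C^0$-conjugate to $R_\vecomega^t$'' hypothesis.
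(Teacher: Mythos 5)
Your proposal is correct and follows the paper's argument exactly: deduce that $\phi^t$ has unique rotation vector $\vecomega$ from the $C^0$-conjugacy (the paper achieves this by citing the first assertion of Lemma~\ref{properties rot}, whose lifting proof you reproduce inline) and then invoke Theorem~\ref{main thm I}. The only point worth flagging is that your lifting step, like Lemma~\ref{properties rot}, implicitly requires $g\in\Homeo_0(\Tt^d)$ so that $\tilde g-\id$ is $\Zz^d$-periodic and hence bounded; this matches the convention in Section~\ref{section:Rotation vectors}.
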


Therefore, the topological and analytic conjugacy classes of $R_\vecomega^t$ are locally the same for $\vecomega\in\YY$ and coincide with the set of close-to-linear flows with a rotation vector $\omega$.
Note that by allowing time-reparametrizations and recalling that $R_{\lambda\vecomega}^t=R_\vecomega^{\lambda t}$, $\lambda\not=0$, we obtain larger conjugacy classes.

In Proposition \ref{prop DC in YY} we will see that the set $\YY$ contains all diophantine vectors, having full Lebesgue measure.
In dimension two $\YY$ corresponds to the set of vectors whose slope is a Brjuno number.

We highlight the fact that no condition such as volume-preservation is required for the main theorem to hold. It is known \cite{Herman1975,Herman1979,Moser2} that volume-preserving $C^\omega$-flows with a diophantine rotation vector $\vecomega$ and close to $R_\vecomega^t$ are $C^\omega$-conjugated to $R_\vecomega^t$. That is a consequence of the existence of a parameter $\lambda$ which makes a vector field $X+\lambda$ (not necessarily divergence free) conjugated to a translation, where $X$ is close to $\vecomega$ (see also \cite{jld5}). With $R_\vecomega^t$ ergodic, if the volume is preserved and the rotation vector is $\vecomega$, then $\lambda$ has to vanish (Proposition 2.6.1 \cite{Herman1979} pp.180).

We believe that further extensions of Theorem \ref{main thm I}, namely for $C^r$ vector fields, should be accessible by the present method. Two open and outstanding problems are the corresponding global result and the determination of an optimal condition on $\vecomega$ for which the theorem holds.

The proof of Theorem \ref{main thm I} is a consequence of the convergence under renormalization of vector fields in some small ball around $\vecomega$. The renormalization operator is basically a coordinate change and time rescaling related to the continued fractions of $\vecomega$ (cf. \cite{Koch,jld5}). Convergence is guaranteed if the rotation vector $\vecomega\in\YY$. The fact that we have a unique rotation vector permits us to control the distance between $\vecomega$ and the constant Fourier mode of the perturbed vector fields. Our scheme therefore contracts a ball in its domain towards the orbit under renormalization of the constant vector field $\vecomega$. The differentiable conjugacy then follows.

In section \ref{section:Multidimensional continued fractions} we review the multidimensional continued fractions scheme introduced in \cite{jld5}, and define the class of $\YY$ vectors.
In sections \ref{section:Rotation vectors} and \ref{section:Preliminaries} we present the renormalization building blocks, which will be put together in section \ref{section:Renormalization} in order to determine sufficient conditions for the existence of infinitely renormalizable vector fields.
In section \ref{section:Differentiable rigidity} we construct the analytic conjugacy for vector fields which are attracted under renormalization to the orbit of the constant system. The proof of Theorem \ref{main thm I} is concluded in section \ref{section:proof local rig thm}.

 For the following we set the notations $\Nn=\{1,2,\dots\}$ for the positive integers and $\Nn_0=\Nn\cup\{0\}$ for the non-negative integers.
Moreover, $A\ll B$ stands for the existence of a constant $C>0$ such that $A\leq CB$.


\section{Multidimensional continued fractions}
\label{section:Multidimensional continued fractions}

 In this section we present the multidimensional continued fractions algorithm introduced in \cite{jld5}. In addition, we define the class of vectors $\YY$ from the properties of the continued fractions expansion in an appropriate way to be used later by the renormalization scheme.

\subsection{Flow on homogeneous space}

Denote by $G=\SLR$, $\Gamma=\SLZ$ and take a fundamental domain $\FF\subset G$ of the homogeneous space $\Gamma\backslash G$ (the space of $d$-dimensional non-degenerate unimodular lattices).
On $\FF$ consider the flow:
\begin{equation}\label{def geod flow}
\Phi^t\colon \FF \to\FF,
\quad
M\mapsto P(t) M E^t,
\end{equation}
where 
$$
E^t=\diag(\e^{-t}, \dots, \e^{-t}, \e^{(d-1)t}) \in G
$$
and $P(t)$ is the unique family in $\Gamma$ that keeps $\Phi^tM$ in $\FF$ for every $t\geq0$.

 For the remaining of this paper we fix $\vecomega=(\vecalf,1)\in\Rr^d$. We are then interested in the orbit under $\Phi^t$ of the matrix 
\begin{equation}
M_\vecomega=
\begin{pmatrix}
I & \vecalf \\
\vecnull & 1
\end{pmatrix}.
\end{equation}

\subsection{Growth of the flow}

Let the function $\delta\colon \Gamma\backslash G\to\Rr^+$ measuring the shortest vector in the lattice $M$ be
\begin{equation}
\delta(M)=\inf_{\veck\in\Zz^d-\{0\}}\|\trans \veck M\|,
\end{equation}
where $\|\cdot\|$ stands for the $\ell_1$-norm (in the following we will make use of the corresponding matrix norm taken as the usual operator norm).
Notice that $\delta(\Phi^t M_\vecomega)=\delta(M_\vecomega E^t)$.

\begin{proposition}[\cite{jld5}]\label{bnds Mn}
There exist $C_1,C_2>0$ such that for all $t\geq0$
\begin{equation}
\|\Phi^t M_\vecomega\| \leq \frac{C_1}{\delta(\Phi^t M_\vecomega)^{d-1}}
\te{and}
\|(\Phi^t M_\vecomega)^{-1}\| \leq \frac{C_2}{\delta(\Phi^t M_\vecomega)}.
\end{equation}
\end{proposition}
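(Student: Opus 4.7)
The plan is to derive both bounds from Minkowski's second theorem on successive minima, together with the property that points $M$ of the fundamental domain $\FF$ have rows that behave like a reduced basis of the lattice $\Lambda_M\subset\Rr^d$ they span. Write $M=\Phi^t M_\vecomega$, so $M\in\FF\subset\SL(d,\Rr)$ and $\Lambda_M$ is unimodular, i.e.\ $\det\Lambda_M=1$. Let $\lambda_1\leq\lambda_2\leq\dots\leq\lambda_d$ be the successive minima of $\Lambda_M$ with respect to the $\ell_1$-norm, and observe that
\begin{equation*}
\lambda_1=\inf_{\veck\in\Zz^d-\{0\}}\|\trans\veck M\|=\delta(M).
\end{equation*}

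First I would invoke Minkowski's second theorem, which yields a dimension-dependent constant $c_d>0$ with
\begin{equation*}
c_d^{-1}\leq \lambda_1\lambda_2\cdots\lambda_d\leq c_d.
\end{equation*}
Using $\lambda_i\geq\lambda_1$ for every $i\geq 2$ one obtains $\lambda_2\cdots\lambda_d\geq\lambda_1^{d-1}$, whence
\begin{equation*}
\lambda_d\leq \frac{c_d}{\lambda_1\cdots\lambda_{d-1}}\leq \frac{c_d}{\lambda_1^{d-1}}=\frac{c_d}{\delta(M)^{d-1}}.
\end{equation*}
This is the purely arithmetic input: the shortest vector being small forces at most one other independent direction to be long, bounded by $\delta(M)^{-(d-1)}$.

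The second ingredient is geometric: because $\FF$ is a fixed fundamental domain for $\Gamma\backslash G$ (a Siegel-type set), the rows of any $M\in\FF$ realize the successive minima up to a multiplicative constant depending only on $d$. Consequently $\|M\|\asymp\lambda_d$, and dually $\|M^{-1}\|\asymp\lambda_1^{-1}$, where $\|\cdot\|$ is the operator norm associated with $\ell_1$. Combining the two displays gives
\begin{equation*}
\|M\|\leq \frac{C_1}{\delta(M)^{d-1}},
\qquad
\|M^{-1}\|\leq \frac{C_2}{\delta(M)},
\end{equation*}
as required, with $C_1,C_2$ depending only on $d$ and on the fundamental domain $\FF$.

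The main obstacle is the second ingredient, namely verifying that within $\FF$ one has the two-sided comparison $\|M\|\asymp\lambda_d$ and $\|M^{-1}\|\asymp\lambda_1^{-1}$. For a generic choice of fundamental domain this is false, so one needs to know that $\FF$ can be (or has been) taken inside a Siegel set, where standard reduction theory provides the required bounded distortion between the chosen basis of $\Lambda_M$ and a Minkowski-reduced one. Once this identification with reduction theory is in place, the estimates reduce to the simple pigeonhole argument on successive minima given above.
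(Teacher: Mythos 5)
The paper gives no proof of Proposition~\ref{bnds Mn}; it is simply cited from \cite{jld5}, so there is no in-paper argument to compare against. Judged on its own merits, your reduction-theory route is the standard and correct one: once one knows that $\FF$ sits inside a Siegel set, the Iwasawa/reduction comparison $\|M\|\asymp\max_i a_i$ together with $\prod_i a_i=\det M=1$ and $\delta(M)\asymp\min_i a_i$ gives both bounds at once, and your Minkowski-second-theorem step ($\lambda_d\leq c_d\lambda_1^{-(d-1)}$) is one convenient way to package the $\|M\|$ half.

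The one place that is genuinely underargued, and which you partially acknowledge, is the clause ``dually $\|M^{-1}\|\asymp\lambda_1^{-1}$.'' That is not a consequence of Minkowski's second theorem for $\Lambda_M$ alone. Either you must invoke the transference theorem relating $\lambda_1(\Lambda_M)$ to $\lambda_d(\Lambda_M^\ast)$ \emph{and} argue that the basis of $\Lambda_M^\ast$ furnished by $M^{-1}$ is reduced (which does not follow formally from $M$ being reduced without using the Siegel-set structure), or, more cleanly, you should bypass successive minima for the inverse entirely: from $M=UAK$ with $U,U^{-1}$ bounded and $K$ orthogonal, one reads off directly $\|M^{-1}\|\asymp\|A^{-1}\|=(\min_i a_i)^{-1}\asymp\delta(M)^{-1}$. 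As written, your phrase ``dually'' papers over exactly the step your own closing paragraph flags as the obstacle, so the proposal is a correct outline rather than a complete proof; filling in the Iwasawa/Siegel comparison (or the transference argument) would close it.
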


\subsection{Stopping times}
\label{subsection:Stopping times}

Consider a sequence of times, called {\em stopping times},
\begin{equation}\label{time seq}
t_0=0 < t_1 < t_2 < \dots \to +\infty
\end{equation}
such that the matrices $P(t)$ in \eqref{def geod flow} satisfy
\begin{equation}
P_n:=P(t_n)\not= P(t_{n-1}),
\end{equation}
with $n\in\Nn$. We also set $P_0=P(t_0)=I$.
The sequence of matrices $P_n\in\SL(d,\Zz)$ are the rational approximates of $\vecomega$, called the {\em multidimensional continued fractions expansion}.
In addition we define the transfer matrices
\begin{equation}\label{defn Tn}
T_n=P_n P_{n-1}^{-1},
\quad
n\in\Nn,
\te{and}
T_0=I.
\end{equation}

The flow of $M_\vecomega$ taken at the time sequence is thus the sequence of matrices
\begin{equation}\label{defn Mn}
M_n=\Phi^{t_n}M_\vecomega=P_n M_\vecomega E^{t_n}.
\end{equation}
Using some properties of the flow, the above can be decomposed (see \cite{jld5}) into
\begin{equation}\label{def Mn 2}
M_n=
\begin{pmatrix}
I & \vecalf_n \\
\vecnull & 1
\end{pmatrix}
\begin{pmatrix}
\Delta_n & \vecnull \\
\trans\vecbeta_n & \gamma_n
\end{pmatrix}
\end{equation}
with $\gamma_n$ being the $d$-th component of the vector $\e^{(d-1)t_n} P_n\vecomega$.

Define $\vecomega_n=(\vecalf_n,1)$, $\vecomega_0=\vecomega$ and, for $n\in\Nn$,
\begin{equation} \label{eq def omega n}
\vecomega_n 
= \gamma_n^{-1} M_n
\left(\begin{smallmatrix}0\\ \vdots \\ 0 \\1\end{smallmatrix}\right)
= \lambda_nP_n\vecomega
= \eta_nT_n\vecomega_{n-1},
\end{equation} 
where 
\begin{equation} \label{eq def eta n lambda n}
\lambda_{n} =
\frac 1{\gamma_n} \e^{(d-1)t_{n}}
\quad\text{and}\quad
\eta_n = 
\frac{\lambda_n}{\lambda_{n-1}}.
\end{equation}

We remark that when $d=2$, there exists a sequence of stopping times (called Hermitte critical times) that gives an accelerated version of the standard continued fractions of a number $\alpha$ \cite{jld5,Lagarias94}.

\subsection{Resonance widths}

We call {\em resonance widths} to the terms of any decreasing sequence $\sigma\colon\Nn_0\to\Rr^+$.

\subsection{Resonance cone}

Given resonance widths $\sigma\colon\Nn_0\to\Rr^+$, define the resonant cones to be 
\begin{equation}\label{def I+}
I_n^+ =\{ \veck \in\Zz^d \colon 
|\veck\cdot\vecomega_n |\leq \sigma_n \|\veck\| \}.
\end{equation}
In addition, let
\begin{equation}\label{def An}
A_n
=\sup_{\veck\in I_n^+-\{0\}}
\frac{\|\trans T_{n+1}^{-1}\veck\|}{\|\veck\|}.
\end{equation}

\begin{proposition}\label{lemma resonance cone}
There is $c>0$ such that for any $n\in\Nn_0$
\begin{equation}\label{formula An}
A_n
\leq
c\, \e^{-\delta t_{n+1}}
\frac{ \sigma_{n}\e^{d\delta t_{n+1}}  + 1 }
{\delta(M_n)^{d-1} \delta(M_{n+1})},
\end{equation}
where $\delta t_{n+1}=t_{n+1}-t_{n}$.
\end{proposition}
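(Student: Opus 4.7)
The natural starting point is to rewrite $T_{n+1}^{-1}=P_nP_{n+1}^{-1}$ in terms of the flowed matrices. From \eqref{defn Mn} we have $P_n=M_n E^{-t_n}M_\vecomega^{-1}$ and $P_{n+1}^{-1}=M_\vecomega E^{t_{n+1}}M_{n+1}^{-1}$, so
\begin{equation*}
T_{n+1}^{-1}=M_n\,E^{\delta t_{n+1}}\,M_{n+1}^{-1},
\qquad
\trans T_{n+1}^{-1}=\trans M_{n+1}^{-1}\,E^{\delta t_{n+1}}\,\trans M_n.
\end{equation*}
Hence it suffices to bound $\bigl\|E^{\delta t_{n+1}}\trans M_n\veck\bigr\|$ for $\veck\in I_n^+$ and then hit the result with $\trans M_{n+1}^{-1}$.

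The key observation is that the last column of $M_n$ is exactly $\gamma_n\vecomega_n$, since by \eqref{eq def omega n} $M_n\trans(0,\dots,0,1)=\gamma_n\vecomega_n$. Consequently the $d$-th component of $\trans M_n\veck$ equals $\gamma_n(\veck\cdot\vecomega_n)$, and the resonance condition $\veck\in I_n^+$ yields the cancellation
\begin{equation*}
\bigl|(\trans M_n\veck)_d\bigr|=|\gamma_n|\,|\veck\cdot\vecomega_n|\leq |\gamma_n|\,\sigma_n\|\veck\|.
\end{equation*}
Since the last column of $M_n$ has $\ell_1$-norm $|\gamma_n|\,\|\vecomega_n\|\geq|\gamma_n|$, we have $|\gamma_n|\leq\|M_n\|$, while the remaining components satisfy the trivial bound $|(\trans M_n\veck)_j|\leq\|\trans M_n\|\,\|\veck\|\ll\|M_n\|\,\|\veck\|$ (operator norms of $A$ and $\trans A$ differ only by a constant factor in finite dimension).

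Now $E^{\delta t_{n+1}}=\diag(\e^{-\delta t_{n+1}},\dots,\e^{-\delta t_{n+1}},\e^{(d-1)\delta t_{n+1}})$ contracts the first $d-1$ coordinates and only the last (resonant) coordinate is expanded. Splitting the $\ell_1$-norm accordingly gives
\begin{equation*}
\bigl\|E^{\delta t_{n+1}}\trans M_n\veck\bigr\|
\ll\|M_n\|\,\|\veck\|\Bigl(\e^{-\delta t_{n+1}}+\sigma_n\e^{(d-1)\delta t_{n+1}}\Bigr)
=\|M_n\|\,\|\veck\|\,\e^{-\delta t_{n+1}}\bigl(1+\sigma_n\e^{d\delta t_{n+1}}\bigr),
\end{equation*}
which is exactly the factor appearing in \eqref{formula An}.

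Finally, applying $\trans M_{n+1}^{-1}$ and invoking Proposition \ref{bnds Mn} to replace $\|M_n\|\leq C_1/\delta(M_n)^{d-1}$ and $\|\trans M_{n+1}^{-1}\|\ll\|M_{n+1}^{-1}\|\leq C_2/\delta(M_{n+1})$ produces the bound
\begin{equation*}
\|\trans T_{n+1}^{-1}\veck\|\leq c\,\|\veck\|\,\e^{-\delta t_{n+1}}\,\frac{\sigma_n\e^{d\delta t_{n+1}}+1}{\delta(M_n)^{d-1}\,\delta(M_{n+1})},
\end{equation*}
and dividing by $\|\veck\|$ and taking the supremum over $\veck\in I_n^+-\{0\}$ gives the desired estimate. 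The only mildly technical point is keeping track of the difference between $\|A\|$ and $\|\trans A\|$ in the $\ell_1$-operator norm; but this introduces at most a dimensional factor that is absorbed into the constant $c$.
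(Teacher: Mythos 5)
Your proposal is correct and is essentially the same argument as the paper's. Both start from $\trans T_{n+1}^{-1}=\trans M_{n+1}^{-1}E^{\delta t_{n+1}}\trans M_n$ and then exploit the fact that the $d$-th coordinate of $\trans M_n\veck$ is $\gamma_n(\veck\cdot\vecomega_n)$, which is the only place the resonance condition is needed, before invoking Proposition~\ref{bnds Mn}. The paper phrases this by splitting $\veck$ into a component parallel to $\vecomega_n$ and a component $\veck_2\in S_n^\perp$ (so that $\trans M_n\veck_2$ has vanishing last coordinate and $E^{\delta t_{n+1}}$ acts as a pure contraction on it), whereas you split $\trans M_n\veck$ itself into its first $d-1$ coordinates and its last coordinate; these are the same cancellation viewed on opposite sides of $\trans M_n$, and your version is if anything slightly more streamlined since it avoids handling the two summands of $\veck$ separately.
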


\begin{proof}
Take $S^\perp_n$ to be the hyperspace orthogonal to $\vecomega_n$. By \eqref{defn Tn} and \eqref{defn Mn}, $T_n=M_nE^{-\delta t_n}M_{n-1}^{-1}$ and $\trans T_{n+1}^{-1}=\trans M_{n+1}^{-1}E^{\delta t_{n+1}}\trans M_n$. So, for $\vecxi\in S_n^\perp$, \eqref{def Mn 2} gives
$$
\trans T_{n+1}^{-1}\vecxi = 
\e^{-\delta t_{n+1}}\trans M_{n+1}^{-1}\begin{pmatrix}\trans A_n\vecxi'\\0\end{pmatrix}
=
\e^{-\delta t_{n+1}}\trans M_{n+1}^{-1}\trans M_n\vecxi,
$$
where $\vecxi'\in\Rr^{d-1}$ comprises the first $d-1$ components of $\vecxi$.

Now, write $\veck\in I_n^+-\{0\}$ as $\veck=\veck_1+\veck_2$ where
$$
\veck_1=\frac{\veck\cdot\vecomega_n}{\vecomega_n\cdot\vecomega_n}\vecomega_n
\te{and}
\veck_2\in S_n^\perp.
$$
Hence,
\begin{equation}
\begin{split}
\|\trans T_{n+1}^{-1}\veck\| 
&\leq 
\|\trans T_{n+1}^{-1}\veck_1\| + 
\|\trans T_{n+1}^{-1}\veck_2\| \\
&\leq
\sigma_n\|\trans T_{n+1}^{-1}\| \, \|\veck\| +
\e^{-\delta t_{n+1}} \|\trans M_{n+1}^{-1}\| \, \|\trans M_n\| \, \|\veck\| \\
&\leq
(\sigma_n\|E^{\delta t_{n+1}}\|  +
\e^{-\delta t_{n+1}}) \|\trans M_{n+1}^{-1}\| \, \|\trans M_n\| \, \|\veck\|
\end{split}
\end{equation}
which can be estimated using Proposition \ref{bnds Mn}.
\end{proof}


\subsection{Class of diophantine vectors}
\label{subsection:Class of diophantine vectors}

A vector $\vecomega\in\Rr^d$ is diophantine with exponent $\beta\geq0$ if there is a constant $C>0$ such that 
$$
|\vecomega\cdot\veck| > \frac{C}{\|\veck\|^{d-1+\beta}}.
$$
It is a well known fact that the sets $DC(\beta)$ of diophantine vectors with exponent $\beta>0$ are of full Lebesgue measure \cite{Cassels57}.
On the other hand, the set $DC(0)$ has zero Lebesgue measure.
A vector is said to be diophantine if it belongs to $DC=\cup_{\beta\geq0}DC(\beta)$.

\begin{proposition}\label{dioph bdd}
Let $\beta\geq0$. Then, $\vecomega\in DC(\beta)$ iff there is $C'>0$ such that
$$
\delta(\Phi^t M_\vecomega)> C'\e^{-\theta t},
\quad
t\geq0,
$$
with $\theta=\beta/(d+\beta)$.
\end{proposition}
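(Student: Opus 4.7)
The plan is to translate $\delta(\Phi^t M_\vecomega)$ into an explicit infimum over $\Zz^d-\{0\}$ and then match the two sides of the equivalence via a single optimisation in $t$ (for the reverse direction) and a case split (for the forward direction). Writing $\veck=(\vecm,n)\in\Zz^{d-1}\times\Zz$, a direct computation with the $\ell_1$-norm gives
\begin{equation}
\|\trans\veck M_\vecomega E^t\| = e^{-t}\|\vecm\| + e^{(d-1)t}|\veck\cdot\vecomega|,
\end{equation}
so $\delta(\Phi^t M_\vecomega)=\inf_{\veck\neq0}\bigl(e^{-t}\|\vecm\|+e^{(d-1)t}|\veck\cdot\vecomega|\bigr)$. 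Throughout, the arithmetic identity $d-1+\theta=\frac{d(d-1+\beta)}{d+\beta}$, equivalently $(1-\theta)(d-1+\beta)=d-1+\theta$, underlies the exact matching of exponents and follows from $\theta=\beta/(d+\beta)$.

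For the implication $(\Rightarrow)$, assume $\vecomega\in DC(\beta)$. The case $\vecm=0$ gives $|\veck\cdot\vecomega|=|n|\geq1$, so $\|\trans\veck M_\vecomega E^t\|\geq e^{(d-1)t}\geq1$. Otherwise, I argue by contradiction: if $\|\trans\veck M_\vecomega E^t\|\leq C'e^{-\theta t}$ for some $\veck\neq0$ and $t\geq0$, then both $\|\vecm\|\leq C'e^{(1-\theta)t}$ and $|\veck\cdot\vecomega|\leq C'e^{-(d-1+\theta)t}$. From the second inequality (taken with $C'<1$) one has $|n+\vecm\cdot\vecalf|<1$, whence $|n|\leq\|\vecm\|\|\vecalf\|_\infty+1$ and so $\|\veck\|\leq C_{\vecalf}\|\vecm\|\leq C_{\vecalf}C'e^{(1-\theta)t}$. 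The diophantine condition then gives $|\veck\cdot\vecomega|>C/\|\veck\|^{d-1+\beta}\geq C_1(C')^{-(d-1+\beta)}e^{-(d-1+\theta)t}$, which contradicts $|\veck\cdot\vecomega|\leq C'e^{-(d-1+\theta)t}$ as soon as $C'$ is chosen smaller than a constant depending only on $\vecomega$ and $\beta$.

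For the implication $(\Leftarrow)$, assume the lower bound on $\delta(\Phi^t M_\vecomega)$. Given any $\veck\neq0$ in $\Zz^d$, I select the $t\geq0$ that makes the two terms balance on $\veck$, namely
\begin{equation}
e^t = \bigl(2\|\veck\|/C'\bigr)^{1/(1-\theta)} = \bigl(2\|\veck\|/C'\bigr)^{(d+\beta)/d},
\end{equation}
which is admissible since $\|\veck\|\geq1$ forces $t\geq0$ for any $C'\leq2$. With this choice, $e^{-t}\|\vecm\|\leq e^{-t}\|\veck\|=(C'/2)e^{-\theta t}$, so the hypothesis forces $e^{(d-1)t}|\veck\cdot\vecomega|>(C'/2)e^{-\theta t}$. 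Substituting back the value of $e^t$ and invoking the identity $e^{(d-1+\theta)t}=(2\|\veck\|/C')^{d-1+\beta}$ yields $|\veck\cdot\vecomega|>(C'/2)^{d+\beta}/\|\veck\|^{d-1+\beta}$, so $\vecomega\in DC(\beta)$.

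I expect the main obstacle to be the forward direction in the subcase $\vecm\neq0$: the diophantine condition bounds $|\veck\cdot\vecomega|$ from below in terms of $\|\veck\|$, not $\|\vecm\|$, so I must use the smallness of $|\veck\cdot\vecomega|$ itself to re-express $|n|$ in terms of $\|\vecm\|$ (via the observation that $|n+\vecm\cdot\vecalf|$ is tiny) in order to get $\|\veck\|\leq C_{\vecalf}\|\vecm\|$. This is the only step where the norm $\|\vecalf\|_\infty$ explicitly enters the constants, and it is the reason the resulting constant $C'$ must be taken small relative to a quantity depending on $\vecomega$.
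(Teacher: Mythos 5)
Your proposal is correct, and for the direction the paper actually proves (lower bound on $\delta$ implies $DC(\beta)$) it is essentially the same argument: compute $\|\trans\veck M_\vecomega E^t\|=\e^{-t}\|\widehat\veck\|+\e^{(d-1)t}|\veck\cdot\vecomega|$ and optimize $t$ per lattice vector. The only cosmetic difference there is the choice of $t(\veck)$: the paper picks $t$ to equalize the two terms $\e^{-t}\|\veck\|$ and $\e^{(d-1)t}|\veck\cdot\vecomega|$, which requires a case split on whether $\|\veck\|\ge|\veck\cdot\vecomega|$, whereas you pick $t$ solely from $\|\veck\|$ so that the first term becomes $(C'/2)\e^{-\theta t}$, eliminating the case split. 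Both lead to the same exponent arithmetic $(1-\theta)(d-1+\beta)=d-1+\theta$.

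Where you genuinely diverge is the other direction: the paper simply cites the implication $DC(\beta)\Rightarrow\delta(\Phi^tM_\vecomega)>C'\e^{-\theta t}$ to [jld5, Lemma 2.3] and does not reprove it, whereas you give a self-contained contradiction argument. Your key device there — using the smallness of $|\veck\cdot\vecomega|=|n+\vecm\cdot\vecalf|$ itself to bound $|n|$ by $O(\|\vecm\|)$ and hence $\|\veck\|$ by $O(\|\vecm\|)$, so that the diophantine lower bound on $|\veck\cdot\vecomega|$ beats the assumed upper bound when $C'$ is small — is sound, as is the $\vecm=0$ boundary case. That makes your write-up more self-contained than the paper's, at the modest cost of constants depending on $\|\vecalf\|_\infty$; the paper buys brevity by outsourcing that implication.
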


\begin{proof}
Notice first that $\trans \veck M_\vecomega E^t=(\e^{-t}\widehat\veck, \e^{(d-1)t}(\veck\cdot\vecomega))$, where $\veck=(\widehat\veck,k_d)\in\Zz^d-\{0\}$.
Now, we have that 
\begin{equation}\label{eq dioph bdd 1}
C'< \e^{\theta t}\inf_{\veck\not=0}\|\trans k M_\vecomega E^t\| \leq \max\{\e^{-(1-\theta)t}\|\veck\|,\e^{(d-1+\theta)t}|\veck\cdot\vecomega|\}.
\end{equation}
Let
$$
t=t(\veck)=
\begin{cases}
\frac1d\log\frac{\|\veck\|}{|\veck\cdot\vecomega|}, 
& \|\veck\|\geq|\veck\cdot\vecomega| \\
0, & \|\veck\| \leq |\veck\cdot\vecomega|.
\end{cases}
$$
Using this $t$ in \eqref{eq dioph bdd 1}, the second case trivially means that $|\veck\cdot\vecomega|> C' > C \|\veck\|^{-(d-1+\beta)}$ for some constant $C>0$. For the first case,
$$
C'< \|\veck\|^{(d-1+\theta)/d}|\veck\cdot\vecomega|^{1-(d-1+\theta)/d}.
$$
So, $|\veck\cdot\vecomega|>C'\|\veck\|^{d-1+\beta}$ and $\vecomega$ is diophantine with exponent $\beta$.

The converse is proved in \cite{jld5}-Lemma 2.3.
\end{proof}

\begin{proposition}\label{proposition norm estimates}
If $\vecomega\in DC(\beta)$, $\beta\geq0$, 
there are constants $c_1,c_2,c_3,c_4,c_5,c_6,c_7>0$ 
such that, for all stopping-time sequence $t\colon\Nn_0\to\Rr$,
\begin{eqnarray}
\label{M1}
\| M_n \| &\leq& c_1\exp[(d-1)\theta t_{n}] , \\
\label{M2}
\| M_n^{-1} \|  &\leq& c_2 \exp(\theta t_n), \\
\label{P1}
\| P_n \|  &\leq& c_3\exp[(d\,\theta+1-\theta) t_{n}], \\
\label{P2}
\| P_n^{-1} \|  &\leq& c_4 \exp[(d-1+\theta) t_n], \\
\label{T1}
\| T_n \|&\leq& c_5\exp[(1-\theta)\delta t_n+d\,\theta\, t_n], \\
\label{T2}
\| T_n^{-1} \| 
&\leq& c_6 \exp[(d-1)(1-\theta)\delta t_n + d\,\theta\,t_n],
\end{eqnarray}
and
\begin{equation}\label{gamma1}
c_7
\exp\bigg[-\theta \bigg(\frac{d^2}{1-\theta}-(d-1)\bigg) t_n\bigg] 
\leq 
|\gamma_n|
\leq 
c_1
\exp[(d-1)\theta t_{n}],
\end{equation}
where $\delta t_n=t_n-t_{n-1}$ and $\theta=\beta/(d+\beta)$.
\end{proposition}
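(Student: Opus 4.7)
The plan is to derive all seven estimates by combining Propositions \ref{bnds Mn} and \ref{dioph bdd} with the algebraic identities relating $M_n$, $P_n$, and $T_n$. Under $\vecomega\in DC(\beta)$, Proposition \ref{dioph bdd} gives $\delta(\Phi^{t_n}M_\vecomega)\geq C'\e^{-\theta t_n}$ for all $n$, and substituting this into Proposition \ref{bnds Mn} yields (M1) and (M2) immediately.

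For (P1) and (P2) I would invert the defining relation \eqref{defn Mn} to write $P_n=M_n E^{-t_n} M_\vecomega^{-1}$ and $P_n^{-1}=M_\vecomega E^{t_n} M_n^{-1}$. Since $E^{\pm t_n}$ is diagonal and $t_n\geq 0$, its operator norm equals the largest diagonal entry: $\|E^{-t_n}\|=\e^{t_n}$ and $\|E^{t_n}\|=\e^{(d-1)t_n}$. Multiplying and using (M1), (M2) together with the fixed norms of $M_\vecomega^{\pm 1}$ gives the claimed exponents, once one notes $(d-1)\theta+1=d\theta+1-\theta$. For (T1) and (T2) the identity $T_n=M_n E^{-\delta t_n} M_{n-1}^{-1}$ (already used in the proof of Proposition \ref{lemma resonance cone}) and its analogue $T_n^{-1}=M_{n-1} E^{\delta t_n} M_n^{-1}$ reduce everything to the same scheme with $\delta t_n$ in place of $t_n$; after collecting the exponents in $t_n$, $t_{n-1}$, and $\delta t_n$ and using $t_{n-1}=t_n-\delta t_n$, the mixed forms $(1-\theta)\delta t_n+d\theta t_n$ and $(d-1)(1-\theta)\delta t_n+d\theta t_n$ emerge by direct arithmetic.

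For \eqref{gamma1}, the upper bound is immediate from \eqref{def Mn 2}: $\gamma_n$ is the $(d,d)$-entry of $M_n$, so $|\gamma_n|\leq \|M_n\|\leq c_1\e^{(d-1)\theta t_n}$. The lower bound is the delicate part, and I would proceed as follows. By \eqref{eq def omega n} and \eqref{eq def eta n lambda n}, the last coordinate of $\vecomega_n=\lambda_n P_n\vecomega$ equals $1$, whence $\lambda_n=1/(P_n\vecomega)_d$ and therefore $\gamma_n=\e^{(d-1)t_n}(P_n\vecomega)_d=\e^{(d-1)t_n}\,\vecr_n\cdot\vecomega$, where $\vecr_n\in\Zz^d$ is the $d$-th row of $P_n$. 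Since $\det P_n=1$, $\vecr_n$ is nonzero; applying the diophantine condition to the integer vector $\vecr_n$ yields $|\vecr_n\cdot\vecomega|\geq C\|\vecr_n\|^{-(d-1+\beta)}$, and bounding $\|\vecr_n\|$ by a constant multiple of $\|P_n\|$ via (P1) gives $|\gamma_n|\geq c_7\e^{-E t_n}$ with
\begin{equation*}
E=[(d-1)\theta+1](d-1+\beta)-(d-1)=\beta+\theta(d-1)(d-1+\beta).
\end{equation*}
A short manipulation using the identity $\theta(d+\beta)=\beta$ shows $E=\theta\bigl(\tfrac{d^2}{1-\theta}-(d-1)\bigr)$, matching the stated exponent.

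The main obstacle is the lower bound on $|\gamma_n|$: it requires invoking the diophantine condition on the integer row vector $\vecr_n$ produced by the continued fraction algorithm (rather than on $\vecomega$ directly), and then recognising that the resulting exponent can be rearranged into the compact form stated in the proposition. All the other estimates amount to careful bookkeeping with submultiplicativity of the operator norm.
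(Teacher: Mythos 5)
Your proposal is correct and follows exactly the route the paper intends: the paper's proof is a one-line pointer ("a direct consequence of Proposition~\ref{dioph bdd} applied to Proposition~\ref{bnds Mn}, cf.~\cite{jld5}"), and your argument supplies precisely the missing details. Plugging $\delta(\Phi^{t_n}M_\vecomega)>C'\e^{-\theta t_n}$ into Proposition~\ref{bnds Mn} gives (M1)--(M2); the identities $P_n=M_n E^{-t_n}M_\vecomega^{-1}$ and $T_n=M_n E^{-\delta t_n}M_{n-1}^{-1}$ together with $\|E^{-s}\|=\e^{s}$, $\|E^{s}\|=\e^{(d-1)s}$ for $s\geq0$ give (P1)--(T2) by submultiplicativity, and your exponent bookkeeping (including the use of $t_{n-1}=t_n-\delta t_n$) checks out. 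For the lower bound in \eqref{gamma1}, writing $\gamma_n=\e^{(d-1)t_n}\vecr_n\cdot\vecomega$ with $\vecr_n$ the nonzero integer $d$-th row of $P_n$, invoking $DC(\beta)$ on $\vecr_n$, and bounding $\|\vecr_n\|\leq d\|P_n\|$ gives, after substituting $\beta=\theta d/(1-\theta)$, exactly the stated exponent $\theta\bigl(\tfrac{d^2}{1-\theta}-(d-1)\bigr)$, which I verified algebraically. This is the argument the paper defers to \cite{jld5}.
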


\begin{proof}
These estimates are a direct consequence of Proposition \ref{dioph bdd} applied to Proposition \ref{bnds Mn} (cf. \cite{jld5}).
\end{proof}

\begin{proposition}\label{prop hyp cone for dioph}
If $\vecomega\in DC(\beta)$, $\beta\geq0$, then there is $c>0$ such that for any $n\in\Nn_0$,
\begin{equation}
A_n
\leq
c\, \e^{-(1-\theta)\delta t_{n+1}+d\theta t_{n}}
\left(
\sigma_{n} \e^{d\delta t_{n+1}}+1\right).
\end{equation}
\end{proposition}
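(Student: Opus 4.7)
The plan is to combine Proposition \ref{lemma resonance cone} with the diophantine lower bound of Proposition \ref{dioph bdd}, so that the denominators $\delta(M_n)^{d-1}\delta(M_{n+1})$ in \eqref{formula An} turn into explicit exponentials in $t_n$ and $t_{n+1}$.

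More precisely, first I would recall that $M_n=\Phi^{t_n}M_\vecomega$, so Proposition \ref{dioph bdd} (applied with $\theta=\beta/(d+\beta)$) gives a constant $C'>0$ with
\begin{equation*}
\delta(M_n)\geq C'\e^{-\theta t_n},
\qquad
\delta(M_{n+1})\geq C'\e^{-\theta t_{n+1}}.
\end{equation*}
Therefore
\begin{equation*}
\delta(M_n)^{d-1}\,\delta(M_{n+1})
\geq
(C')^d\,\e^{-(d-1)\theta t_n-\theta t_{n+1}}
=
(C')^d\,\e^{-d\theta t_n-\theta\,\delta t_{n+1}}.
\end{equation*}

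Next I would substitute this lower bound into \eqref{formula An}, which yields
\begin{equation*}
A_n
\leq
c\,(C')^{-d}\,\e^{-\delta t_{n+1}}\,\e^{d\theta t_n+\theta\,\delta t_{n+1}}
\bigl(\sigma_n\e^{d\,\delta t_{n+1}}+1\bigr),
\end{equation*}
and collecting the exponents of $\delta t_{n+1}$ gives exactly $-(1-\theta)\delta t_{n+1}+d\theta t_n$ in the prefactor, i.e. the claimed estimate with a suitable new constant $c>0$.

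There is really no obstacle here: the statement is a direct consequence of the two previous propositions, and the only thing to be careful about is keeping track of the splitting $t_{n+1}=t_n+\delta t_{n+1}$ so that the final exponent is written in terms of $\delta t_{n+1}$ and $t_n$ as in the conclusion, rather than $t_n$ and $t_{n+1}$ separately.
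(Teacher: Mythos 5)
Your proof is correct and takes exactly the approach the paper does: the paper's one-line proof simply says the claim follows by applying Proposition \ref{dioph bdd} to Proposition \ref{lemma resonance cone}, and you have spelled out precisely that substitution, including the correct regrouping $(d-1)\theta t_n + \theta t_{n+1} = d\theta t_n + \theta\,\delta t_{n+1}$.
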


\begin{proof}
It follows immediately by applying Proposition \ref{dioph bdd} to Proposition \ref{lemma resonance cone}.
\end{proof}

\subsection{Class of $\YY$-vectors}

In this paper we will be dealing with a class of vectors which satisfies some arithmetical properties more general than the diophantine ones (cf. section \ref{subsection:Class of diophantine vectors}).

A vector $\vecomega\in\Rr^d$ belongs to $\YY$ if there exists sequences of resonance widths $\sigma\colon\Nn_0\to\Rr^+$ and of stopping-times $t\colon\Nn_0\to\Rr$ such that
\begin{equation}\label{defn YY vectors}
\sum_{n\geq0}A_0\dots A_n 
\log\left(|\eta_{n+1}|\,\|T_{n+1}\| \, \frac{\sigma_n\|\vecomega_{n+1}\|}{\sigma_{n+1}}\right)
<\infty
\end{equation}
and
\begin{equation}\label{cdn on sigma n for YY}
\lim_{n\to+\infty}2^{2n}\sigma_n\|P_n^{-1}\|\,\|T_0\|^2\dots\|T_n\|^2=0.
\end{equation}
The conditions above should be seen as lower and upper bounds on $\sigma_n$.
It follows immediately that $\vecomega_N\in\YY$ for any fixed $N\in\Nn$, using shifted sequences $\sigma_{N+n}$ and $t_{N+n}$.

The class of $\YY$ vectors contains the full probability set of diophantine vectors as it is proved below.

\begin{proposition}\label{prop DC in YY}
$DC\subset\YY$.
\end{proposition}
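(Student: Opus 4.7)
The plan is to exhibit, for a given $\vecomega\in DC(\beta)$, explicit sequences $(t_n)$ and $(\sigma_n)$ satisfying both (\ref{defn YY vectors}) and (\ref{cdn on sigma n for YY}), by combining the exponential bounds of Proposition \ref{proposition norm estimates} with the contraction estimate of Proposition \ref{prop hyp cone for dioph}. Set $\theta=\beta/(d+\beta)\in[0,1)$ and fix $a>a_*:=(1+(d-1)\theta)/(1-\theta)$ (note $a_*\geq 1$, with equality iff $\theta=0$, so the choice is always possible). This makes $\epsilon:=(1-\theta)(a-1)-d\theta>0$, which will drive the contraction.

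First I would extract a subsequence $(t_n)$ of the jump times of $P(\cdot)$ with approximately geometric growth at rate $a$, namely $a t_n\leq t_{n+1}\leq (a+a')t_n$ for some constant $a'>0$. Then I would set $\sigma_n=e^{-ct_n}$ with $c>0$ a large constant to be pinned down at the end. Both defining conditions of $\YY$ are then verified by direct diophantine book-keeping.

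The heart of the estimate is Proposition \ref{prop hyp cone for dioph}: with $\delta t_{n+1}\in [(a-1)t_n,(a+a'-1)t_n]$ and $c$ taken large enough so that $\sigma_n e^{d\delta t_{n+1}}$ stays bounded, one obtains $A_n\leq C_0\,e^{-\epsilon t_n}$, so $A_0\cdots A_n$ decays super-exponentially in $n$ (since $t_n\gtrsim a^n$). For (\ref{defn YY vectors}): Proposition \ref{proposition norm estimates} bounds each of $|\eta_{n+1}|$, $\|T_{n+1}\|$, $\|\vecomega_{n+1}\|$ by $e^{O(t_{n+1})}$, and $\sigma_n/\sigma_{n+1}=e^{c\delta t_{n+1}}=e^{O(t_{n+1})}$, so the logarithm is $O(t_{n+1})=O(a^{n+1})$, which cannot defeat the super-exponential decay of $A_0\cdots A_n$, and the series converges. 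For (\ref{cdn on sigma n for YY}), taking logarithms and using $\sum_{k=0}^n t_k\leq \tfrac{a+a'}{a+a'-1}t_n$ together with Proposition \ref{proposition norm estimates} produces a bound of the form $[-c+K(a,a',d,\theta)]\,t_n+O(n)$; choosing $c>K$ sends this expression to $-\infty$, as required.

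The hard part is the first step, namely producing a subsequence of jumps of $P(t)$ with $t_n\asymp a^n$. This requires knowing that jumps of $P$ occur densely enough along the diophantine orbit to allow such a selection, which should follow from combining the lower bound $\delta(\Phi^t M_\vecomega)>C'e^{-\theta t}$ of Proposition \ref{dioph bdd} with the size bounds of Proposition \ref{bnds Mn} and the geometry of the fundamental domain $\FF$. Once this selection is available, the remainder is routine exponential book-keeping with the diophantine estimates.
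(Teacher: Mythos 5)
Your proposal is correct and follows the same overall blueprint as the paper (exhibit explicit sequences $t_n$, $\sigma_n$ and verify (\ref{defn YY vectors}) and (\ref{cdn on sigma n for YY}) via Propositions \ref{proposition norm estimates} and \ref{prop hyp cone for dioph}), but the parametric choices are genuinely different and worth comparing. The paper takes $\sigma_n=\e^{-d\,\delta t_{n+1}}$, which is tuned precisely so that $\sigma_n\e^{d\delta t_{n+1}}=1$ in (\ref{formula An}) and the two contributions to $A_n$ balance, and then forces super-fast decay of $A_0\cdots A_n$ by letting $t_n$ grow at an iterated-exponential rate $t_{n+1}=\e^{(1-\theta-\xi)t_n}$. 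You instead take $\sigma_n=\e^{-ct_n}$ with a free large constant $c$, and $t_n$ growing only geometrically at rate $a>a_*=(1+(d-1)\theta)/(1-\theta)$. Your observation that the logarithmic factor in (\ref{defn YY vectors}) is only $O(t_{n+1})=O(a^{n+1})$ while $A_0\cdots A_n\lesssim C^n\e^{-\epsilon\sum_{k\le n}t_k}=\e^{-\Omega(a^n)}$ is exactly right and makes geometric growth sufficient; the threshold $a_*$ is correctly the value where $\epsilon=(1-\theta)(a-1)-d\theta$ vanishes. For (\ref{cdn on sigma n for YY}) your bookkeeping is sound; the only slip is cosmetic, since from $t_{k+1}\ge a\,t_k$ one gets $\sum_{k\le n}t_k\le \tfrac{a}{a-1}\,t_n$ (not $\tfrac{a+a'}{a+a'-1}t_n$), but this does not affect the conclusion that the bracketed constant $K$ is finite and $c$ can be chosen above it (and simultaneously above $d(a+a'-1)$). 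Your version is arguably more economical and transparent, and it requires far less aggressive stopping times.

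One remark in your favor: you explicitly flag the need to realize the prescribed times as genuine stopping times, i.e.\ as a subsequence of jump times of $P(\cdot)$ with the desired asymptotics. The paper's own proof silently writes down $t_{n+1}=\e^{(1-\theta-\xi)t_n}$ and never addresses this point, even though Proposition \ref{proposition norm estimates} is stated only ``for all stopping-time sequence.'' Your sketch of why the selection should be possible (the diophantine orbit $\Phi^tM_\vecomega$ stays in a compact part of $\Gamma\backslash G$ by Proposition \ref{dioph bdd} and Proposition \ref{bnds Mn}, and on a compact set the flow crosses fundamental-domain boundaries with bounded gaps, so jump times are within $O(1)$ of any prescribed target) is the right idea and would in fact also be needed to make the paper's own proof rigorous. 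So on this point your write-up is more careful than the original, not less.
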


\begin{proof}
Let $0<\xi<1-\theta$, the stopping-time sequence given recursively by $t_{n+1}=\e^{(1-\theta-\xi)t_n}$, $n\in\Nn$, $t_0=0$, and the sequences of resonance widths given by $\sigma_n= \e^{-d\delta t_{n+1}}$ for $n$ large enough.

For $\vecomega\in DC(\beta)$, $\beta\geq0$, using Propositions \ref{proposition norm estimates} and \ref{prop hyp cone for dioph}, the terms in the series in \eqref{defn YY vectors} can be bounded from above by
$$
C^nt_{n+2}\e^{-(1-\theta)t_{n+1}+d\theta\sum_{i=0}^nt_i},
$$
for some constant $C>0$. By our choice of the stopping-times, the above can be estimated from above by $C'\e^{(1-\theta-\xi)t_{n+1}-(1-\theta-\xi')t_{n+1}}$ with $C'>0$ and $0<\xi'<\xi$. Therefore, the series in \eqref{defn YY vectors} converges.

On the other hand, the expression in the limit of \eqref{cdn on sigma n for YY} can be bounded by
$$
2^{2n}\e^{-d\delta t_{n+1}+(d+1-\theta)t_n+2\theta d\sum_{i=0}^nt_i}
$$
which goes to zero as $n\to+\infty$ by our present choice of stopping times.

This completes the proof that $\vecomega\in\YY$.
\end{proof}

\begin{remark}
If we restrict to $d=2$ it is natural to use the standard continued fractions expansion. In this case it can be shown that $\YY$ corresponds to the Brjuno vectors, i.e. vectors whose slope is a Brjuno number.
\end{remark}


\section{Rotation vectors}
\label{section:Rotation vectors}

We define the rotation vector of a flow $\phi^t$ at each $\vecx\in\Tt^d$ to be the asymptotic direction of the corresponding orbit of the lift $\Phi^t(\vecx)$ to the universal cover:
\begin{equation}
\rot(\phi)(\vecx)=\lim_{t\to\infty}\frac{\Phi^t(\vecx)-\vecx}{t},
\end{equation}
if the limit exists.

\begin{remark}
If the rotation vector exists at $\vecx$ for a flow $\phi^t$ generated by a vector field $X$ on $\Tt^d$ (i.e. $\frac d{dt}\phi^t=X\circ\phi^t$), it is the time average of the vector field along the orbit:
\begin{equation}
\rot(\phi)(\vecx)= \lim_{t\to\infty} \frac1t \int_0^t X\circ\phi^s(\vecx)ds.
\end{equation}
\end{remark}

When the rotation vector exists for all $\vecx\in\Tt^d$,
the rotation set of $\phi$ is
\begin{equation}
\rot(\phi)=\{\rot(\phi)(\vecx)\colon \vecx\in\Tt^d\}.
\end{equation}

Throughout this text we denote by $\Homeo(M)$ and $\Diff^r(M)$, $r\in\Nn\cup\{\infty,\omega\}$, the set os homeomorphisms and $C^r$-diffeomorphisms on $M$. Moreover, we add a subscript $0$ to distinguish the case of isotopic to the identity maps. Finally, $\vf^r(M)$ stands for the set of $C^r$-vector fields on $M$.

\begin{lemma}\label{properties rot}
Let $h\in\Homeo_0(\Tt^d)$, $\lambda\not=0$ and $T\in\GL(d,\Zz)$.
If $\rot(\phi)\not=\emptyset$, then
\begin{equation}
\rot(h^{-1}\circ\phi\circ h)=\rot(\phi)
\quad\text{and}\quad
\rot(T^{-1}\circ\phi^{\lambda\cdot}\circ T)= \lambda T^{-1}\rot(\phi).
\end{equation}
\end{lemma}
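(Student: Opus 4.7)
The plan is to lift both identities to the universal cover $\Rr^d$, where conjugation by $h\in\Homeo_0(\Tt^d)$, conjugation by $T\in\GL(d,\Zz)$, and time rescaling by $\lambda$ each act transparently on the asymptotic displacement $\Phi^t(\vecx)-\vecx$.

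For the first identity, let $H$ be a lift of $h$. Since $h$ is isotopic to the identity, $H$ commutes with $\Zz^d$-translations, so $H-\mathrm{id}$ and $H^{-1}-\mathrm{id}$ are $\Zz^d$-periodic, hence bounded on $\Rr^d$. The decomposition
\begin{equation*}
H^{-1}\Phi^t(H\vecx)-\vecx=\bigl(H^{-1}-\mathrm{id}\bigr)(\Phi^t(H\vecx))+\bigl(\Phi^t(H\vecx)-H\vecx\bigr)+\bigl(H\vecx-\vecx\bigr)
\end{equation*}
exhibits $\Phi^t(H\vecx)-H\vecx$ as the only summand of order $t$; dividing by $t$ and sending $t\to\infty$ yields $\rot(h^{-1}\circ\phi\circ h)(\vecx)=\rot(\phi)(h(\vecx))$. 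Since $h$ is onto $\Tt^d$, taking images over $\vecx\in\Tt^d$ gives the claimed set equality.

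For the second identity, $T\in\GL(d,\Zz)$ lifts to a linear automorphism of $\Rr^d$ (also denoted $T$) and descends to a homeomorphism of $\Tt^d$ since $|\det T|=1$. Linearity gives
\begin{equation*}
\frac{T^{-1}\Phi^{\lambda t}(T\vecx)-\vecx}{t}=\lambda T^{-1}\cdot\frac{\Phi^{\lambda t}(T\vecx)-T\vecx}{\lambda t}.
\end{equation*}
For $\lambda>0$ the substitution $s=\lambda t\to+\infty$ immediately produces $\lambda T^{-1}\rot(\phi)(T\vecx)$. For $\lambda<0$ one obtains instead the backward asymptotic direction at $T\vecx$; this coincides with the forward one because the rotation vector is orbit-invariant (observe that $\Phi^t(\Phi^s\vecy)-\Phi^s\vecy=(\Phi^{t+s}\vecy-\vecy)-(\Phi^s\vecy-\vecy)$ forces $\rot(\phi)(\Phi^s\vecy)=\rot(\phi)(\vecy)$), so setting $\vecz_u=\Phi^{-u}(T\vecx)$ one rewrites $(\Phi^{-u}(T\vecx)-T\vecx)/(-u)=(\Phi^u\vecz_u-\vecz_u)/u$ and passes to the limit. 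Surjectivity of $T$ on $\Tt^d$ then promotes the pointwise identity to the set equality $\rot(T^{-1}\circ\phi^{\lambda\cdot}\circ T)=\lambda T^{-1}\rot(\phi)$.

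The only mildly delicate step is the $\lambda<0$ case, where I need to identify the backward and forward rotation vectors; beyond this, everything reduces to bookkeeping with bounded periodic corrections to lifts and a linear change of time variable.
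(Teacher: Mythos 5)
Your argument for the first identity and for the second identity when $\lambda>0$ is essentially the paper's: the paper writes $h^{-1}=\id+\varphi$ with $\varphi$ a $\Zz^d$-periodic (hence bounded) function, which is exactly your observation that $H^{-1}-\id$ is periodic, and the rest of the decomposition is the same; for the $T,\lambda$ part the paper performs the same rewriting $\frac1t = \frac{\lambda}{\lambda t}$ and substitutes $s=\lambda t$, which silently assumes $\lambda>0$. You are right that this chain of equalities does not by itself cover $\lambda<0$, since then $s=\lambda t\to -\infty$ and the resulting quantity is a backward-time average that the paper never relates to $\rot(\phi)$.

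However, the fix you propose for $\lambda<0$ has a gap. Setting $\vecz_u=\Phi^{-u}(T\vecx)$, you need
\begin{equation*}
\lim_{u\to\infty}\frac{\Phi^u\vecz_u-\vecz_u}{u}=\rot(\phi)(T\vecx).
\end{equation*}
Orbit-invariance gives $\lim_{t\to\infty}(\Phi^t\vecz_u-\vecz_u)/t=\rot(\phi)(T\vecx)$ for each \emph{fixed} $u$, but that is a one-parameter family of pointwise limits in $t$; your expression is a diagonal limit where the base point $\vecz_u$ moves with $u$. Pointwise existence of rotation vectors plus orbit-invariance does not control a diagonal limit—without some uniformity in the base point, $(\Phi^u\vecz_u-\vecz_u)/u$ need not converge, and if it converges it need not land on $\rot(\phi)(T\vecx)$. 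To close the gap one can appeal to the fact that on the compact torus, if the rotation vector exists at every point and is the single constant $\vecomega$, the Birkhoff averages of the generating vector field converge uniformly (weak-* compactness of the empirical measures $\frac1u\int_0^u\delta_{\Phi^s(\vecz_u)}\,ds$ together with $\int X\,d\nu=\vecomega$ for every invariant ergodic $\nu$). That argument, however, must be stated explicitly and uses a hypothesis—unique rotation vector—slightly stronger than the bare $\rot(\phi)\neq\emptyset$ in the lemma, although it is the situation in which the paper actually applies the lemma.
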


\begin{proof}
Writing $h^{-1}=\id+\varphi$ with $\varphi$ a $\Zz^d$-periodic function, we get
\begin{equation}
\rot(h^{-1}\circ\phi\circ h)(\vecx)=
\lim_{t\to\infty} \frac1t[\Phi^t\circ h(\vecx)+\varphi\circ\phi^t\circ h(\vecx)-h(\vecx)+h(\vecx)-\vecx].
\end{equation}
The fact that $\varphi$ is bounded and that there is a rotation vector for all points in $\Tt^d$,
yields $\rot(h^{-1}\circ\phi\circ h)(\vecx)=\rot(\phi)(h(\vecx))$.
Thus the first assertion.

The second claim follows from
\begin{equation}
\begin{split}
\rot(T^{-1}\circ\phi^{\lambda\cdot}\circ T)(\vecx) 
&= 
\lim_{t\to\infty} \frac1t(T^{-1}\circ\Phi^{\lambda t}\circ T\vecx-\vecx)
\\ 
&=
\lim_{t\to\infty} \frac1{\lambda t}\lambda T^{-1}(\Phi^{\lambda t}\circ T\vecx-T\vecx)
\\
&=
\lambda T^{-1} \rot(\phi)(T\vecx).
\end{split}
\end{equation}
\end{proof}

\begin{proposition}\label{prop rot control EX}
Let $\phi^t$ be the flow generated by $X\in\vf^0(\Tt^d)$ and $\vecomega\in\Rr^d$. If $\rot \phi=\{\vecomega\}$, then 
\begin{equation}
\|\Ee X-\vecomega\| \leq d \|X-\Ee X\|_{C^0},
\end{equation}
where $\Ee X=\int_{\Tt^d}X\,dm$ and $m$ denotes the Lebesgue measure on $\Tt^d$.
\end{proposition}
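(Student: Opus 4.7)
The plan is to use the time-average characterisation of the rotation vector noted in the remark just above the statement. Fix any point $\vecx\in\Tt^d$. Since $\rot(\phi)=\{\vecomega\}$ by hypothesis, the rotation vector at $\vecx$ equals $\vecomega$, so
$$
\vecomega = \lim_{t\to\infty}\frac{1}{t}\int_0^t X\circ\phi^s(\vecx)\,ds.
$$
The next step is to split $X = \Ee X + (X-\Ee X)$. Because $\Ee X$ is a constant vector, its time average over $[0,t]$ is just $\Ee X$ itself, so subtracting gives
$$
\vecomega - \Ee X \;=\; \lim_{t\to\infty}\frac{1}{t}\int_0^t (X-\Ee X)\circ\phi^s(\vecx)\,ds.
$$

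It then remains to bound the right-hand side by the $C^0$-size of $X-\Ee X$. I would work componentwise: for each $i\in\{1,\dots,d\}$ the $i$-th component of the integrand is pointwise bounded in absolute value by $\|X-\Ee X\|_{C^0}$, so the triangle inequality for integrals yields $|(\vecomega-\Ee X)_i|\leq \|X-\Ee X\|_{C^0}$. Summing these $d$ scalar inequalities and recalling that $\|\cdot\|$ denotes the $\ell_1$-norm throughout the paper gives
$$
\|\vecomega-\Ee X\| \;=\; \sum_{i=1}^d |(\vecomega-\Ee X)_i| \;\leq\; d\,\|X-\Ee X\|_{C^0},
$$
which is exactly the claimed estimate.

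There is no serious obstacle in this argument: uniqueness of the rotation vector is used only to identify the time average along any chosen orbit with $\vecomega$, after which the estimate is a pointwise-in-time triangle inequality linear in the perturbation $X-\Ee X$. The factor $d$ is not dynamical at all — it is merely the norm-conversion constant between the $\ell_1$-norm on $\Rr^d$ fixed in Section~\ref{section:Multidimensional continued fractions} and the componentwise supremum used to bound the integrand.
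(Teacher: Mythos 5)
Your proof is correct, and it takes a genuinely different route from the paper's. The paper first uses an intermediate-value-theorem argument: for each coordinate $i$ it shows that the time average of $X_i\circ\phi_s(\vecx)-\omega_i$ tending to $0$ forces $\min(X_i-\omega_i)\leq 0\leq\max(X_i-\omega_i)$ on $\Tt^d$, hence by connectedness there is a point $\vecy^{(i)}$ with $X_i(\vecy^{(i)})=\omega_i$; the estimate then comes from comparing $\Ee X_i$ with $X_i(\vecy^{(i)})$. You instead skip the detour through the zero set and bound the error time average directly: $|(\vecomega-\Ee X)_i|=\lim_t\big|\frac1t\int_0^t(X_i-\Ee X_i)\circ\phi^s\,ds\big|\leq\|X-\Ee X\|_{C^0}$, then sum the $d$ components. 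Both are valid and both only need the rotation vector to exist and equal $\vecomega$ along a single orbit. Your version is shorter and avoids invoking connectedness/IVT; the paper's version has the mild side benefit of exhibiting explicit points where each $X_i$ hits $\omega_i$, which can be conceptually useful but is not needed for the estimate. In both arguments the factor $d$ is just the $\ell_1$-versus-componentwise bookkeeping, as you note.
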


\begin{proof}
We first show that for each $1\leq i\leq d$ there is $\vecy^{(i)}\in\Tt^d$ such that $X_i(\vecy^{(i)})=\vecomega_i$. (We represent the $i$th coordinate of vectors by the subscript $i$).
This follows from the fact that $\rot \phi(\vecx)-\vecomega=0$.
I.e. for any $\vecx\in\Tt^d$ and $1\leq i\leq d$,
$$
\lim_{t\to+\infty}\frac1t\int_0^t[X_i\circ\phi_s(\vecx)-\omega_i]ds=0.
$$
The continuous function $\psi(\vecx,t)=X_i\circ\phi_t(\vecx)-\omega_i$ on $\Tt^d\times\Rr^+$ has $m=\min \psi$ and $M=\max \psi$ because $X$ is continuous on a compact set. So, for any $(\vecx,t)$ we have $m\leq \frac1t\int_0^t\psi(\vecx,s)ds\leq M$.
Taking the limit, $m\leq0\leq M$ and we can find a zero of $\psi$, hence of $X_i-\omega_i$.

Now, using the above points $\vecy^{(i)}$,
\begin{equation*}
\begin{split}
\|\Ee X-\vecomega\| & = 
\sum_{i=1}^d|\Ee X_i-\omega_i| = 
\sum_{i=1}^d|X_i(\vecy^{(i)})-\Ee X_i| \\
& \leq
\sum_{i=1}^d \max_{\vecx}|X(\vecx)-\Ee X_i|
\leq
d\,\|X-\Ee X\|_{C^0}.
\end{split}
\end{equation*}
\end{proof}

\begin{remark}
We will be interested in vector fields generating flows that possess the same rotation vector for all orbits.
Hence, for a vector field $X$ we will write $\rot X$ to mean the unique rotation vector associated to the flow generated by $X$.
\end{remark}


\section{Preliminaries}
\label{section:Preliminaries}

\subsection{Definitions}

The transformation of $X\in\vf(M)$ on a manifold $M$ by $\psi\in\Diff(M)$ is given by the {\it pull-back} of $X$ under $\psi$:
$$
\psi^*X=(D\psi)^{-1}X\circ \psi.
$$
As $T\Tt^d\simeq\Tt^d\times\Rr^d$, we identify the
set of vector fields on $\vf(\Tt^d)$ with the set of functions $C(\Tt^d,\Rr^d)$, that can be regarded as $\Zz^d$-periodic maps of $\Rr^d$ by lifting to the
universal cover.
We will make use of the analyticity to extend to the complex domain,
so we will deal with complex analytic functions.



\subsection{Space of vector fields}

Let $\rho>0$ and the domain
\begin{equation}
D_{\rho} =\{\vecx\in\Cc^{d} \colon \|\Im\vecx\| <\frac{\rho}{2\pi}\},
\end{equation}
for the norm $\|{\vecu}\|=\sum_i|u_i|$ on $\Cc^d$.

Take complex analytic functions $f\colon D_\rho \to\Cc^d$ that are
$\Zz^d$-periodic and on the form of the Fourier series 
\begin{equation}\label{F series vf}
f(\vecx)=\sum\limits_{\veck\in\Zz^d}f_\veck\e^{2\pi \i \veck\cdot \vecx}
\end{equation}
with $f_\veck\in\Cc^d$.
The Banach spaces $\A_\rho$ and $\A_\rho'$ are the subspaces of such functions with the respective finite norms 
\begin{equation}
\|f\|_\rho =
\sum\limits_{\veck\in\Zz^d} \|f_\veck\| \, \e^{\rho\|\veck\|}
\te{and}
\|f\|_\rho'=
\sum\limits_{\veck\in\Zz^d} \left(1+2\pi\|\veck\|\right)\|f_\veck\| \,\e^{\rho\|\veck\|}.
\end{equation}
%
%
Consider also the norm $\|f\|_{C^0}=\max_{\vecx\in\Tt^d}\|f(\vecx)\|$.

Some of the properties of the above spaces are of easy verification.
For instance, given any $f,g\in\A_\rho'$ we have:
\begin{itemize}
\item
$\|f(\vecx)\|\leq\|f\|_{C^0}\leq \|f\|_{\rho}\leq\|f\|'_{\rho}$ where $\vecx\in D_{\rho}$,
\item
$\|f\|_{\rho-\delta}\leq\|f\|_{\rho}$ with $0\leq\delta\leq\rho$,
\item
$\|Df\|_\rho\leq \delta^{-1} \|f\|_{\rho+\delta}$ with $\rho,\delta>0$ (Cauchy's estimate).
\end{itemize}

Write the constant Fourier mode of $f\in\A_\rho$ through the projection
\begin{equation}
\Ee f=\int_{\Tt^d}f\,dm=f_0 \in\Cc^d.
\end{equation}

We will only be interested in the above vector fields which are fixed-point-free, i.e. $f(\vecx)\not=0$ for all $\vecx\in D_\rho$.

\subsection{Far from resonance modes}

Given $\sigma>0$, we call {\em far from resonance} modes with respect to $\vecv\in\Rr^d$ to the Fourier modes with indices in 
\begin{equation}\label{def I-}
I_\sigma^- =\left\{ \veck\in \Zz^d \colon 
|\vecv\cdot \veck| > \sigma \|\veck\| 
\right\}.
\end{equation}
The {\em resonant modes} are the ones in $I_\sigma^+=\Zz^d- I_\sigma^-$ which correspond to the resonant cones defined in \eqref{def I+}.
We also have the projections $\Ii_\sigma^+$ and $\Ii_\sigma^-$ over the spaces of vector fields by restricting the modes to $I_\sigma^+$ and $I_\sigma^-$, respectively.
The identity operator is $\Ii=\Ii_\sigma^++\Ii_\sigma^-$.

\subsection{Uniformization}

We call uniformization of a vector field to the action of a diffeomorphism that produces a new vector field with only resonant modes.

Given $\rho,\varepsilon,\nu>0$, denote by $\VV_\varepsilon$ the open ball in $\A'_{\rho+\nu}$ centred at $\vecv\not=0$ with radius $\varepsilon$.

\begin{theorem}[\cite{jld,jld5}]\label{thm unif}
Let $\sigma<\|\vecv\|$ and
\begin{equation}\label{formula epsilon}
\varepsilon=
\frac{\sigma}{42}
\min\left\{
\frac{\nu}{4\pi},
\frac{\sigma}{72\|\vecv\|}
\right\}.
\end{equation}
There exists a 1-parameter smooth family of maps $\fU_t\colon  \VV_{\varepsilon} \to \A'_{\rho}$ and $\UU_t\colon \VV_{\varepsilon} \to  \Ii_\sigma^+\A_{\rho}\oplus(1-t)\Ii_\sigma^-\A'_{\rho+\nu}$ given by $\UU_t(X)=\fU_t(X)^*X$ such that
\begin{equation}\label{equation homotopy method}
\Ii_\sigma^-\UU_t(X)=(1-t)\,\Ii_\sigma^-X,
\qquad
t\in[0,1],
\end{equation}
and
\begin{equation}\label{estimate U around X0}
\begin{split}
\|\fU_t(X)-\id\|'_{\rho}
\leq & 
\frac{42t}{\sigma} \|\Ii_\sigma^-X\|_{\rho}\\
\|\UU_t(X)-\vecv\|_{\rho}
\leq &
(3-t)
\|X-\vecv\|'_{\rho+\nu}.
\end{split}
\end{equation}
Moreover, if $X$ is real-analytic, $\fU_t(X)(\Rr^{2d})\subset\Rr^{2d}$.
\end{theorem}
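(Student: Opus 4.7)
The plan is to use the homotopy (deformation) method. Parametrize a target path by $Y_t = \fU_t^*X$ with the prescribed far-from-resonance part $\Ii_\sigma^-Y_t=(1-t)\Ii_\sigma^-X$, starting at $\fU_0=\id$, $Y_0=X$. Setting $V_t=(\partial_t\fU_t)\circ\fU_t^{-1}$ and differentiating $\fU_t^*X=Y_t$ in $t$, one obtains
\begin{equation*}
\fU_t^*[V_t,X]=\partial_tY_t.
\end{equation*}
Projecting with $\Ii_\sigma^-$ and using $\partial_t\Ii_\sigma^-Y_t=-\Ii_\sigma^-X$ gives an implicit equation for the far-from-resonance part of $V_t$; the resonant part of $V_t$ is free and may be set to zero. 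So the strategy is to solve this equation for $V_t$ at each $t$, integrate the ODE $\partial_t\fU_t=V_t\circ\fU_t$ on $[0,1]$, and track the resulting norms.

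To solve for $V_t$, I would first linearize at the constant field $\vecv$. The leading term of $[V_t,X]$ is $[V_t,\vecv]=-DV_t\cdot\vecv$, which is diagonal in Fourier: for $V_t=\sum\widehat V_{t,\veck}\e^{2\pi\i\veck\cdot\vecx}$ the coefficient of $\e^{2\pi\i\veck\cdot\vecx}$ in $DV_t\cdot\vecv$ is $2\pi\i(\veck\cdot\vecv)\widehat V_{t,\veck}$. On $I_\sigma^-$ the divisor satisfies $|\veck\cdot\vecv|>\sigma\|\veck\|$, so the linear operator inverts with the bound $\|V_t\|'_\rho\ll\sigma^{-1}\|\Ii_\sigma^-X\|_{\rho+\nu/2}$ after absorbing one derivative via Cauchy's estimate on a strip of width $\nu/2$. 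The full nonlinear equation, containing $[V_t,X-\vecv]$ and the pull-back nonlinearity $\fU_t^*(\cdot)-(\cdot)$, is then handled by a Picard iteration in $\A'_\rho$; contraction holds uniformly in $t\in[0,1]$ provided $\varepsilon$ is small in terms of $\sigma$, $\nu$ and $\|\vecv\|$, which is precisely the content of \eqref{formula epsilon} (the $\nu/(4\pi)$ factor comes from the Cauchy loss and the $\sigma/(72\|\vecv\|)$ factor from the invertibility of $I+DU_t$ together with the contraction constant).

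Integrating $\partial_t\fU_t=V_t\circ\fU_t$ from $0$ to $1$ produces the family $\fU_t$, and the bound on $V_t$ combined with Gr\"onwall's inequality yields $\|\fU_t-\id\|'_\rho\leq 42t\sigma^{-1}\|\Ii_\sigma^-X\|_\rho$. For the bound on $\UU_t(X)-\vecv$, I would split it as
\begin{equation*}
\UU_t(X)-\vecv=\Ii_\sigma^+(X-\vecv)+(1-t)\Ii_\sigma^-(X-\vecv)+\bigl(\UU_t(X)-(1-t)\Ii_\sigma^-X-\Ii_\sigma^+X\bigr),
\end{equation*}
where the first two summands are bounded by $\|X-\vecv\|'_{\rho+\nu}$ and the third, which captures the pull-back nonlinearity along the flow of $V_t$, is bounded by $2\|X-\vecv\|'_{\rho+\nu}$ using the estimate just established for $\fU_t-\id$; summing gives the factor $3-t$. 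Real-analyticity of $\fU_t(X)$ follows because $\vecv\in\Rr^d$ makes the divisors $\veck\cdot\vecv$ real, so the inverted cohomological operator maps real Fourier data to real Fourier data, and this property is preserved by the Picard iteration and the flow.

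The main obstacle I expect is ensuring the ODE solution persists up to $t=1$ with the claimed explicit bounds (rather than just for small $t$). This is what forces the precise smallness condition \eqref{formula epsilon}: one needs uniform control of the Picard contraction and of $\det(I+DU_t)$ along the entire homotopy, so that neither the implicit function theorem step nor the composition $V_t\circ\fU_t$ blows up. The two competing losses---width $\nu$ for Cauchy's estimate and $\|\vecv\|$ for the composition---both appear explicitly in the minimum defining $\varepsilon$, and balancing them is the technical heart of the argument.
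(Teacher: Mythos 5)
The paper does not prove Theorem~\ref{thm unif}; it is imported verbatim from the references cited in its statement, so there is no in-paper argument to compare your sketch against. That said, the homotopy/deformation strategy you outline is indeed the mechanism used in those works: prescribe the path $\Ii_\sigma^- Y_t=(1-t)\Ii_\sigma^-X$, differentiate in $t$ to get a cohomological equation for the generator $V_t$, invert the linearization at $\vecv$ (diagonal in Fourier with divisors $2\pi\i\,\veck\cdot\vecv$ bounded away from zero on $I_\sigma^-$), close the nonlinearity by a fixed-point/contraction argument, and integrate $\partial_t\fU_t=V_t\circ\fU_t$ over $[0,1]$. Your decomposition of $\UU_t(X)-\vecv$ into the resonant part, $(1-t)$ times the far-from-resonance part, and a conjugacy remainder is also consistent with how the $(3-t)$ factor arises.

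What you have written is, however, only the plan, and it stops precisely where the substance of the theorem lies. The statement is not a soft existence result: it asserts explicit constants ($42$, $72$, the split $\min\{\nu/(4\pi),\,\sigma/(72\|\vecv\|)\}$), a bound valid uniformly for all $t\in[0,1]$, and a map $\fU_t\colon\VV_\varepsilon\to\A'_\rho$ landing in the correct space. To get these one must (i) show that the Picard operator for $V_t$ is a self-map of a ball in $\A'_\rho$ with a contraction constant that does not degrade along the homotopy, (ii) verify that $\fU_s(D_\rho)\subset D_{\rho+\nu}$ for all $s\leq t$ so that the composition $V_t\circ\fU_t$ and the pull-back $\fU_t^*X$ are defined, and (iii) keep $\id+D(\fU_t-\id)$ invertible, which is where $\sigma/\|\vecv\|$ enters. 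You assert (i)--(iii) ``follow'' rather than establishing them, and you do not say how the $\A'_\rho$ norm (with its extra factor $1+2\pi\|\veck\|$) interacts with the loss of a derivative in $[V_t,X]$; that is the point where the width $\nu$ and the factor $\nu/(4\pi)$ are actually consumed. As a reconstruction of the reference proof the outline is on target, but as a proof it is incomplete at the quantitative core.
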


Let the translation $R_\vecz$ on $\Cc^d$ for each
$\vecz\in\Cc^d$ be 
\begin{equation}\label{translation Rz}
R_\vecz\colon \vecx\mapsto \vecx+\vecz.
\end{equation}

\begin{lemma}\label{theorem 2 elim terms}
In the conditions of Theorem {\rm \ref{thm unif}},
if $\vecx\in \Rr^d$ and $X\in \VV_{\varepsilon}$, then
\begin{equation}\label{relation tilde U and U}
\fU_t(X\circ R_\vecx)=R_\vecx^{-1}\circ \fU_t(X)\circ R_\vecx
\end{equation}
on $\DD_{\rho}$.
\end{lemma}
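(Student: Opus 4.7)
The plan is to show that $\psi_t := R_\vecx^{-1}\circ\fU_t(X)\circ R_\vecx$ satisfies the same defining properties as $\fU_t(X\circ R_\vecx)$, and then invoke uniqueness of the uniformization given by Theorem \ref{thm unif}.

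First I would check that everything is well-defined. Since $\vecx\in\Rr^d$, the translation $R_\vecx$ preserves the strip $D_\rho$ and acts on each Fourier coefficient by multiplication by the unimodular scalar $\e^{2\pi\i\veck\cdot\vecx}$. Consequently the norms $\|\cdot\|_\rho$ and $\|\cdot\|'_{\rho+\nu}$ are invariant under composition with $R_\vecx$, so $Y:=X\circ R_\vecx$ again belongs to $\VV_\varepsilon$. Moreover, since the Fourier projections $\Ii_\sigma^\pm$ only restrict the index set, they commute with $R_\vecx$:
\begin{equation*}
\Ii_\sigma^\pm(f\circ R_\vecx)=(\Ii_\sigma^\pm f)\circ R_\vecx.
\end{equation*}

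Next I would compute $\psi_t^* Y$. Using $DR_\vecx=DR_{-\vecx}=I$, the chain rule gives $D\psi_t=D\fU_t(X)\circ R_\vecx$, hence
\begin{equation*}
\psi_t^*Y = [D\fU_t(X)\circ R_\vecx]^{-1}\,X\circ R_\vecx\circ R_\vecx^{-1}\circ\fU_t(X)\circ R_\vecx
= \bigl[\fU_t(X)^* X\bigr]\circ R_\vecx
= \UU_t(X)\circ R_\vecx.
\end{equation*}
Applying $\Ii_\sigma^-$ and combining the commutation property above with \eqref{equation homotopy method} for $X$,
\begin{equation*}
\Ii_\sigma^-\psi_t^*Y = \bigl(\Ii_\sigma^-\UU_t(X)\bigr)\circ R_\vecx = (1-t)(\Ii_\sigma^- X)\circ R_\vecx = (1-t)\Ii_\sigma^- Y,
\end{equation*}
so $\psi_t$ fulfils the characterising equation \eqref{equation homotopy method} with $X$ replaced by $Y$, and with initial condition $\psi_0=\id$ inherited from $\fU_0(X)=\id$.

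Finally I would invoke uniqueness: the map $\fU_t$ of Theorem \ref{thm unif} is constructed by the homotopy method as the solution at time $t$ of an ODE whose right-hand side is determined by the solution operator of a cohomological equation, and this solution is unique in $\A'_\rho$. Since $\psi_t$ solves the same equation with the same initial data, we must have $\psi_t=\fU_t(Y)$, which is exactly \eqref{relation tilde U and U}. The main (minor) obstacle is keeping track of the equivariance under $R_\vecx$ of each ingredient in the homotopy construction — the derivative, the projections $\Ii_\sigma^\pm$, and the inverse of the relevant small-divisor operator — but all three act diagonally on Fourier modes, so real translations commute with them trivially.
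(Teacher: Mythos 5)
Your argument is correct and is essentially the same as the paper's, which simply observes that $\tilde U_t := R_\vecx^{-1}\circ\fU_t(X)\circ R_\vecx$ solves the homotopy equation \eqref{equation homotopy method} for $\tilde X = X\circ R_\vecx$ and concludes by (implicit) uniqueness. You spell out the supporting facts more carefully — invariance of $D_\rho$ and of the norms under real translations, commutation of $\Ii_\sigma^\pm$ with $R_\vecx$, the explicit pull-back identity $\psi_t^*Y = \UU_t(X)\circ R_\vecx$, and the explicit appeal to uniqueness of the homotopy flow — but the underlying idea and structure are identical.
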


\begin{proof}
Notice that $R_\vecx (D_{\rho})=D_{\rho}$.
If $U_t=\fU_t(X)$ is a solution of the homotopy equation \eqref{equation homotopy method} on $D_{\rho}$, then $\tilde U_t=R_\vecx^{-1}\circ \fU_t(X)\circ R_\vecx$ solves the same equation for $\tilde X=X\circ R_\vecx$, i.e. $\Ii_\sigma^-\tilde X\circ \tilde U_t=(1-t)\Ii_\sigma^-\tilde X$, on $D_{\rho}$.
\end{proof}

\subsection{Rescaling}

The fundamental step of the renormalization is a linear transformation of the
domain of definition of our vector fields.
This is done by a change of basis using the multidimensional continued fractions matrices $T_n\in\SL(d,\Zz)$ of a vector $\vecomega\in\Rr^d$. 
The normalizing scalars $\eta_n$ in \eqref{eq def eta n lambda n} are used for a linear time rescaling.

Consider $X\in\A_{\rho}$.
We are interested in the following coordinate and time linear changes:
\begin{equation}\label{coord x and time change}
L_n\colon\vecx\mapsto T_n^{-1}\vecx,
\qquad
t\mapsto \eta_n t.
\end{equation}
(Negative time rescaling means inverting the direction of time.)
These determine a new vector field as the image of the map
$$
X\mapsto\LL_n(X)=\eta_n L_n^*X = \eta_n T_n X\circ T_n^{-1}.
$$
Thus the following relation holds:
\begin{equation}\label{commutation property Rz and Ln}
L_n^*R_\vecz^*=R_{T_n\vecz}^*L_n^*,
\qquad
\vecz\in\Cc^d.
\end{equation}

Recall the definition of $A_n$ given by \eqref{def An} for some choice of $\sigma_n$. Notice that, according to \eqref{def I+}, $I^\pm_n=I^\pm_{\sigma_n}$ the far from resonance modes with respect to $\vecomega_n$, and we write $\Ii^\pm_n=\Ii^\pm_{\sigma_n}$.

\begin{lemma}\label{proposition TT}
If $\delta>0$ and
\begin{equation}\label{hyp on rho n' for vf}
\rho_n'\leq\frac{\rho_{n-1}}{A_{n-1}}-\delta,
\end{equation}
then $\widetilde\LL_n$ given by $\LL_n$ restricted to $(\Ii^+_{n-1}-\Ee)\A_{\rho_{n-1}}$ into $(\Ii-\Ee)\A'_{\rho_n'}$ is bounded 
with
\begin{equation}\label{bdd tilde LL n}
\|\widetilde\LL_n\|\leq
|\eta_{n}|\, \|T_n \|\,
\left(1+\frac{2\pi}{\delta}\right).
\end{equation}
\end{lemma}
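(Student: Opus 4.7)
The plan is to write $X$ as a Fourier series, push it through $\LL_n$, reindex, and estimate term by term in the $\A'_{\rho_n'}$ norm.

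First, for $X\in(\Ii^+_{n-1}-\Ee)\A_{\rho_{n-1}}$, I would write
\[
X(\vecx)=\sum_{\veck\in I^+_{n-1}\setminus\{0\}}X_\veck\,\e^{2\pi\i\veck\cdot\vecx},
\]
and compute
\[
\LL_n(X)(\vecy)=\eta_n T_n\,X(T_n^{-1}\vecy)
=\sum_{\veck}\eta_n T_n X_\veck\,\e^{2\pi\i(\trans T_n^{-1}\veck)\cdot\vecy}.
\]
Since $T_n\in\SL(d,\Zz)$, the map $\veck\mapsto\trans T_n^{-1}\veck$ is a bijection of $\Zz^d$, so the Fourier coefficient of $\LL_n(X)$ at mode $\vecl=\trans T_n^{-1}\veck$ is $\eta_n T_n X_\veck$. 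Since the sum avoids $\veck=0$, the constant mode $\Ee\LL_n(X)$ vanishes, so $\LL_n(X)\in(\Ii-\Ee)\A'_{\rho_n'}$.

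Next, by the definition \eqref{def An} of $A_{n-1}$, every $\veck\in I^+_{n-1}\setminus\{0\}$ satisfies $\|\trans T_n^{-1}\veck\|\leq A_{n-1}\|\veck\|$. Using this together with the hypothesis \eqref{hyp on rho n' for vf}, which gives $\rho_n' A_{n-1}\leq\rho_{n-1}-\delta A_{n-1}$, I would estimate
\[
\|\LL_n(X)\|'_{\rho_n'}
\leq|\eta_n|\,\|T_n\|\sum_{\veck}\bigl(1+2\pi\|\trans T_n^{-1}\veck\|\bigr)\|X_\veck\|\,\e^{\rho_n'\|\trans T_n^{-1}\veck\|}
\]
\[
\leq|\eta_n|\,\|T_n\|\sum_{\veck}\bigl(1+2\pi A_{n-1}\|\veck\|\bigr)\,\e^{-\delta A_{n-1}\|\veck\|}\,\|X_\veck\|\,\e^{\rho_{n-1}\|\veck\|}.
\]

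The only remaining step is the elementary one-variable bound, setting $u=A_{n-1}\|\veck\|\geq 0$:
\[
(1+2\pi u)\,\e^{-\delta u}\leq 1+2\pi\,\max_{u\geq 0}u\e^{-\delta u}=1+\frac{2\pi}{\delta\,\e}\leq 1+\frac{2\pi}{\delta}.
\]
Pulling this uniform factor outside the sum yields
\[
\|\LL_n(X)\|'_{\rho_n'}\leq|\eta_n|\,\|T_n\|\left(1+\frac{2\pi}{\delta}\right)\|X\|_{\rho_{n-1}},
\]
which is \eqref{bdd tilde LL n}. There is no real obstacle here; the only point requiring care is that the estimate $\|\trans T_n^{-1}\veck\|\leq A_{n-1}\|\veck\|$ holds precisely because $X$ lies in the image of $\Ii^+_{n-1}$, so that all active Fourier indices $\veck$ sit inside the resonant cone where the $A_{n-1}$-bound is valid. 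The loss factor $(1+2\pi/\delta)$ is the usual price for passing from the plain $\|\cdot\|_\rho$ norm on the domain to the stronger $\|\cdot\|'_{\rho_n'}$ norm on the range, absorbed by the slack $\delta$ in the radius.
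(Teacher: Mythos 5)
Your proof is correct and follows essentially the same route as the paper: reindex the Fourier series under $\trans T_n^{-1}$, use the definition of $A_{n-1}$ on the resonant cone, and absorb the linear factor $1+2\pi\|\cdot\|$ into the slack $\delta$ via $u\e^{-\delta u}\leq\delta^{-1}$. The only cosmetic difference is order of operations — you apply the bound $\|\trans T_n^{-1}\veck\|\leq A_{n-1}\|\veck\|$ before splitting off the $\e^{-\delta u}$ factor, while the paper splits first — but the estimate and the resulting constant are identical.
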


\begin{proof}
Let $f\in(\Ii^+_{n-1}-\Ee)\A_{\rho_{n-1}}$.
Then,
\begin{equation}
\|f\circ L_n\|'_{\rho_n'}
\leq
\sum_{\veck\in I^+_{n-1}-\{0\}}
\left(1+2\pi\|\trans T_n^{-1}\,\veck\|\right)
\|f_\veck\|
\e^{(\rho_n'-\delta+\delta)\|\trans T_n^{-1} \veck\|}.
\end{equation}
By using the relation $\xi \e^{-\delta\,\xi}\leq \delta^{-1}$ with $\xi\geq0$, and \eqref{def An}, we get
\begin{equation}
\|f\circ L_n\|'_{\rho_n'}
\leq
\left(1+\frac{2\pi}{\delta}\right)
\sum_{I^+_{n-1}-\{0\}} 
\|f_\veck\|
\e^{A_{n-1}(\rho_n'+\delta)\|\veck\|}
\leq 
\left(1+\frac{2\pi}{\delta}\right)
\|f\|_{\rho_{n-1}}.
\end{equation}
Finally, $\|\widetilde\LL_n f\|'_{\rho_n'}\leq |\eta_{n}|\,\|T_n \| \,\|f\circ L_n\|'_{\rho_n'}$.

\end{proof}


\subsection{Analyticity strip cut-off}
\label{subsection:Analyticity strip cut-off}

Consider the operator $\II\colon \A_{\rho}\to \A_{\rho'}$ obtained by restricting $X\in \A_{\rho}$ to the domain $D_{\rho'}$. When restricted to non-constant modes, its norm can be estimated as follows.

\begin{lemma}\label{lemma cutoff}
If $0<\phi \leq \e^{\rho-\rho'}$, then $\|\II (\Ii-\Ee)\| \leq \phi^{-1}$.
\end{lemma}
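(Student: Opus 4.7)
The plan is to expand in Fourier series and exploit the fact that the projection $\Ii-\Ee$ removes the zero mode, so every remaining index satisfies $\|\veck\|\ge 1$.

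First I would take an arbitrary $f\in\A_\rho$ and write $(\Ii-\Ee)f(\vecx)=\sum_{\veck\neq 0}f_\veck\e^{2\pi\i\veck\cdot\vecx}$. Applying $\II$ just restricts to $D_{\rho'}$, which by definition of the norm on $\A_{\rho'}$ gives
\begin{equation*}
\|\II(\Ii-\Ee)f\|_{\rho'}=\sum_{\veck\neq 0}\|f_\veck\|\,\e^{\rho'\|\veck\|}
=\sum_{\veck\neq 0}\|f_\veck\|\,\e^{\rho\|\veck\|}\e^{-(\rho-\rho')\|\veck\|}.
\end{equation*}
Since $\|\veck\|\ge 1$ on every term of the sum, and the hypothesis $\phi\le\e^{\rho-\rho'}$ (which forces $\rho\ge\rho'+\log\phi$) implies $\e^{-(\rho-\rho')\|\veck\|}\le\e^{-(\rho-\rho')}\le\phi^{-1}$ for all such $\veck$, I would factor this uniform bound out of the sum to obtain
\begin{equation*}
\|\II(\Ii-\Ee)f\|_{\rho'}\le\phi^{-1}\sum_{\veck\neq 0}\|f_\veck\|\,\e^{\rho\|\veck\|}\le\phi^{-1}\|f\|_\rho.
\end{equation*}
Taking the supremum over $f$ with $\|f\|_\rho\le 1$ yields the claimed operator norm estimate.

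There is essentially no obstacle here: the only subtlety is remembering that the lower bound $\|\veck\|\ge 1$ comes from the removal of the constant mode by $\Ii-\Ee$, without which the factor $\e^{-(\rho-\rho')\|\veck\|}$ would be $1$ at $\veck=\vecnull$ and no contraction would be available. Note also that the statement implicitly requires $\rho\ge\rho'$, which is automatic from $0<\phi\le\e^{\rho-\rho'}$.
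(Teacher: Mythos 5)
Your argument is correct and is exactly the one the paper has in mind (it writes only ``The proof is immediate''): remove the zero mode so that $\|\veck\|\ge 1$, factor $\e^{\rho'\|\veck\|}=\e^{\rho\|\veck\|}\e^{-(\rho-\rho')\|\veck\|}$, and extract the uniform bound $\e^{-(\rho-\rho')}\le\phi^{-1}$. One small slip in your closing remark: the hypothesis $0<\phi\le\e^{\rho-\rho'}$ does \emph{not} by itself force $\rho\ge\rho'$ (one could have $\rho<\rho'$ with $0<\phi<1$); rather, $\rho'\le\rho$ is built into the definition of $\II$ as a restriction from $D_\rho$ to the smaller strip $D_{\rho'}$, and it is that fact which justifies the monotonicity step $\e^{-(\rho-\rho')\|\veck\|}\le\e^{-(\rho-\rho')}$ for $\|\veck\|\ge1$.
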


The proof is immediate.


\section{Renormalization}
\label{section:Renormalization}

\subsection{Renormalization operator}

We are only interested in vectors $\vecomega\in\Rr^d$ that satisfy $\vecomega\cdot\veck\not=0$ for any $\veck\in\Zz^d-\{0\}$.
Associated to $\sigma_{n-1}$ (specifying the cones $I_{n-1}^\pm$) and $\phi_n$ (the size of the analyticity strip width cut-off in \ref{subsection:Analyticity strip cut-off}, related to the domain we are considering), the $n$-th step renormalization operator is defined to be $\II_n \circ \LL_n \circ \UU_{n-1}$, where $\UU_{n-1}$ is the full elimination of the modes in $I^-_{n-1}$ as in Theorem \ref{thm unif} with $\vecv=\vecomega_{n-1}$ (for $t=1$). Vector fields in the domain of the renormalization operator for some pair $(\sigma_{n-1},\phi_n)$ are said to be renormalizable.

We denote the iteration of the renormalizations up to the step $n$ by
$$
\RR_n=\II_n \circ \LL_n \circ \UU_{n-1} \circ \RR_{n-1}
\quad\text{with}\quad
\RR_0=\id,
$$
which is defined on $\A'_\rho$. We highlight the fact that each $\RR_n$ is specified by a given set of pairs $\{(\sigma_{k-1},\phi_{k})\}_{1\leq k\leq n}$. 
A vector field inside the domain of $\RR_n$, for all $n$, is called infinitely renormalizable. Notice that $\RR_n(\vecomega_0+\vecv)=\vecomega_n$, for every $\vecv\in\Cc^d$ and any choice of a sequence pair $(\sigma,\phi)$. 


 Following the previous sections, the map $\RR_n$ is analytic on its domain.
Also, in case a vector field $X$ is real-analytic, the same is true for
$\RR_n(X)$.

\subsection{Coordinate changes}

Assuming that $X$ is in the domain of $\RR_n$, denote by $X_n=\RR_n(X)$ so that
\begin{equation}
X_n=\lambda_n\,(U_0 \circ T_1^{-1} \cdots U_{n-1}\circ T_n^{-1})^*(X)
\quad\text{and}\quad
X_0=X,
\end{equation}
where $U_n(X)=\fU_n(X_n)$. Thus,
\begin{equation}\label{formula X n with Vs}
P_n^*X_n= \lambda_n\, W_{n-1}(X)^* \cdots W_0(X)^*(X)
\end{equation}
with the isotopic to the identity diffeomorphisms
\begin{equation}
W_n(X)=
P_n^{-1}\circ U_n(X) \circ P_n.
\end{equation}
If $X$ is real-analytic, then $W_n(X)(\Rr^d)\subset\Rr^d$, since this property holds for $U_n(X)$. We also have $W_n(\Ii^+X)=\id$.

Finally, by Proposition \ref{properties rot}, $\rot\RR_n(X)=\lambda_n P_n\rot X$.


\subsection{Infinitely renormalizable vector fields}
\label{The Limit Set of the Renormalization}

We want to find sufficient conditions on sequences of pairs $(\sigma,\phi)$ in order to obtain infinitely renormalizable vector fields.

For any $\sigma\colon\Nn_0\to\Rr^+$ satisfying $\sigma_n<\|\vecomega_n\|$, we choose $\phi\colon\Nn\to(1,\infty)$ given by
\begin{equation}\label{choice sigma phi}
\phi_n  =\max\left\{1,
2 (d+1) \|\widetilde\LL_n\|
\frac{\varepsilon_{n-1}}{\varepsilon_n}
\right\}.
\end{equation}
Here $\varepsilon_n$ is as in \eqref{formula epsilon} for the $n$-th step.
Then, for any $\rho>0$, we associate the sequence of analyticity strip widths
\begin{equation}\label{formula rho n vf}
\rho_n = 
\frac{\rho - \BB_n(\vecomega,\sigma)}{A_0\dots A_{n-1}}, 
\quad
n\in\Nn,
\end{equation}
where
\begin{equation}\label{def B omega}
\BB_n(\vecomega,\sigma)=
\sum_{i=0}^{n-1} A_0\dots A_i \log\left(\e^{\delta+\nu/A_i}\phi_{i+1}\right) >0
\end{equation}
and $\nu$ and $\delta$ are the constants given in Theorem \ref{thm unif} and Lemma \ref{proposition TT}, respectively, taken to be the same for all $n$.
In case $\rho_n\leq0$ the renormalization procedure stops at step $n$.
Recall that each $A_n$ also depends on $\sigma_n$ as in \eqref{def An}.
Finally, define the function
\begin{equation}
\BB(\vecomega,\sigma)=\lim_{n\to+\infty}\BB_n(\vecomega,\sigma)
\end{equation}
whenever the limit exists (enough being bounded from above).

\begin{theorem}\label{convergence Rn}
If $X\in\A'_\rho$ is real-analytic and
\begin{itemize}
\item
$\rot X=\vecomega$, 
\item
$\|(\Ii-\Ee)X\|'_\rho < \varepsilon_0/(d+1)$,
\item
$\rho > \BB(\vecomega,\sigma)$,
\end{itemize}
then $X$ is infinitely renormalizable and
\begin{equation}\label{bound on LLn less than epsilon}
\|X_n-\vecomega_n\|'_{\rho_n}
< \varepsilon_n,
\qquad
n\in\Nn.
\end{equation}
\end{theorem}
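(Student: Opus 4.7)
The plan is to argue by induction on $n$, establishing simultaneously at each step that (a) $X_n$ lies in the ball $\VV_{\varepsilon_n}$ on which $\UU_n$ is defined, (b) the rotation vector satisfies $\rot X_n = \vecomega_n$, (c) $\rho_{n+1}>0$ so the renormalization can continue, and (d) the quantitative estimate \eqref{bound on LLn less than epsilon} holds. The base case uses the decomposition $\|f\|'_\rho = \|\Ee f\| + \|(\Ii - \Ee)f\|'_\rho$ together with Proposition \ref{prop rot control EX} (applicable since $\rot X = \vecomega$) to obtain $\|X - \vecomega\|'_\rho \leq (d+1)\|(\Ii - \Ee)X\|'_\rho < \varepsilon_0$.

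For the inductive step, I would decompose $X_{n+1} = \II_{n+1}\circ \LL_{n+1}\circ \UU_n(X_n)$ and track the error and the strip width through each operator. Theorem \ref{thm unif} at $t=1$ gives $\|\UU_n(X_n) - \vecomega_n\|_{\rho_n - \nu} \leq 2\|X_n - \vecomega_n\|'_{\rho_n} < 2\varepsilon_n$, with $\UU_n(X_n)$ having no modes in $I_n^-$, so its non-constant part sits in $(\Ii_n^+ - \Ee)\A_{\rho_n - \nu}$. Lemma \ref{proposition TT}, applied with output strip $\rho_{n+1}' = (\rho_n - \nu)/A_n - \delta$, controls $\widetilde\LL_{n+1}$ by $|\eta_{n+1}|\|T_{n+1}\|(1 + 2\pi/\delta)$. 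Finally, Lemma \ref{lemma cutoff} shows that the cut-off $\II_{n+1}\colon \A'_{\rho_{n+1}'}\to \A'_{\rho_{n+1}}$ contracts non-constant modes by $\phi_{n+1}^{-1}$. A direct computation verifies that the definition \eqref{formula rho n vf} of $\rho_{n+1}$ via $\BB_{n+1}$ is precisely the bookkeeping required so that $\log\phi_{n+1} = \rho_{n+1}' - \rho_{n+1}$, while $\rho_{n+1} > 0$ follows from the hypothesis $\rho > \BB(\vecomega,\sigma) \geq \BB_{n+1}$.

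Chaining the three operator bounds should yield $\|(\Ii - \Ee)X_{n+1}\|'_{\rho_{n+1}} < 2\phi_{n+1}^{-1}\|\widetilde\LL_{n+1}\|\varepsilon_n$, and the choice \eqref{choice sigma phi} of $\phi_{n+1}$ is tailored exactly so that the right-hand side is at most $\varepsilon_{n+1}/(d+1)$. To promote this bound on the non-constant modes to a bound on the full displacement $\|X_{n+1} - \vecomega_{n+1}\|'_{\rho_{n+1}}$, I would exploit the rotation-vector invariance: $\UU_n$ is isotopic to the identity so preserves rotation vectors, $\LL_{n+1}$ transforms rotation by $\eta_{n+1}T_{n+1}$ via Lemma \ref{properties rot}, and $\II_{n+1}$ leaves the real-analytic vector field unchanged. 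Using the recursion $\vecomega_{n+1} = \eta_{n+1}T_{n+1}\vecomega_n$, this propagates $\rot X_{n+1} = \vecomega_{n+1}$, and Proposition \ref{prop rot control EX} supplies the factor $d+1$ that closes the loop.

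I expect the main obstacle to lie in the combinatorial bookkeeping: verifying that the strip-width losses $\nu$ (from uniformization), $\delta$ and $1/A_n$ (from rescaling), and $\log\phi_{n+1}$ (from cut-off) are exactly absorbed by \eqref{formula rho n vf} and \eqref{choice sigma phi}. The subtler conceptual point is that $\UU_n$ does not fix the constant Fourier mode, so $\Ee\UU_n(X_n)$ generally drifts away from $\vecomega_n$; the renormalization operators alone cannot control this drift, and it is only through the rotation-vector hypothesis---used via Proposition \ref{prop rot control EX}---that the drift is tied back to the non-constant estimate and hence absorbed.
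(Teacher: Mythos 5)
Your proposal is correct and follows essentially the same route as the paper's proof: a single induction on $n$ that (i) propagates $\rot X_n=\vecomega_n$ through $\UU_{n-1}$, $\LL_n$, $\II_n$ via Lemma \ref{properties rot}, (ii) uses Proposition \ref{prop rot control EX} to dominate the constant-mode drift by $d\,\|(\Ii-\Ee)X_n\|'_{\rho_n}$, and (iii) chains the three operator bounds with the strip-width bookkeeping built into \eqref{formula rho n vf} and \eqref{choice sigma phi}. You have also correctly pinpointed the one genuinely delicate point — that $\UU_n$ does not fix $\Ee$, so the rotation-vector hypothesis is indispensable to close the induction — which is precisely how the paper absorbs the drift.
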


\begin{remark}
Our choice \eqref{choice sigma phi} of $\phi$ is the ``smallest'' that we have achieved here so that $X$ is infinitely renormalizable. 
\end{remark}

\begin{proof}
Firstly we remark that by Proposition \ref{prop rot control EX},
\begin{equation}
\begin{split}
\|X-\vecomega\|'_\rho &\leq
\|X-\Ee X\|'_\rho + \|\Ee X-\vecomega\| \\
&\leq
 (d+1)\|(\Ii-\Ee)X\|'_\rho \\
& <\varepsilon_0.
\end{split}
\end{equation}
So, we can consider the bound on $\|X-\vecomega\|'_\rho$ instead of $\|(\Ii-\Ee)X\|'_\rho$.

Notice that if
\begin{equation}\label{cdn inf ren}
\|X_{n}-\vecomega_n\|'_{\rho_n} <\varepsilon_n
\end{equation}
(meaning that at each step $X_n$ is in the domain of $\UU_n$), then $X_n$ is renormalizable and $X_{n+1}=\RR_n(X)$. 
Being true for any $n\in\Nn$, then $X$ is infinitely renormalizable.
The inequality \eqref{cdn inf ren} can be estimated using Proposition \ref{prop rot control EX} and Lemmas \ref{lemma cutoff} and \ref{proposition TT} by
\begin{equation}
\begin{split}
\|X_{n}-\vecomega_n\|'_{\rho_n} 
&=
\|\II_n\LL_n\UU_{n-1}(X_{n-1})-\vecomega_n\|'_{\rho_n}
\\
&\leq
\|(\Ii-\Ee)\II_n\LL_n\UU_{n-1}(X_{n-1})\|'_{\rho_n} + \|\Ee\LL_n(X_{n-1})-\vecomega_n\| \\
&\leq
(d+1) \|\II_n(\Ii-\Ee)\LL_n(X_{n-1})\|'_{\rho_n}
\\ 
&\leq
(d+1)\frac{\|\widetilde\LL_n\|}{\phi_n} \|(\Ii-\Ee)\UU_{n-1}(X_{n-1})\|'_{\xi},
\end{split}
\end{equation}
where $\xi=A_{n-1}(\rho_n+\log\phi_n+\delta)$.

We now proceed by induction. Assuming that \eqref{bound on LLn less than epsilon} holds for $n-1$ and substituting the value of $\phi_n$,
\begin{equation}
\|X_{n}-\vecomega_n\|'_{\rho_n}
\leq
(d+1)\frac{2\|\widetilde\LL_n\|}{\phi_n} \|X_{n-1}-\vecomega_{n-1}\|'_{\xi'} <
\varepsilon_{n},
\end{equation}
where $\xi'=\xi+\nu=\rho_{n-1}$.
\end{proof}

\subsection{Width of resonance cones}

In the following sections we assume that the conditions of Theorem \ref{convergence Rn} are satisfied. In particular, we will be interested in those vectors $\vecomega$ for which we can find $\sigma$ as given in the lemma below so that $\BB(\vecomega,\sigma)$ converges and is less than $\rho$. 

Let 
$$
R_n=\frac1{2^n\|T_0\|\dots\|T_n\|}.
$$

\begin{lemma}\label{lemma cdns Rn}
If $\sigma\colon\Nn_0\to\Rr^+$ satisfies $\sigma_n\leq \|\vecomega_n\|$ and
\begin{equation}\label{cdn on sigma wrt Rn}
\lim_{n\to+\infty}\frac{\sigma_n\|P_n^{-1}\|}{R_n(R_{n-1}-R_n)\|\vecomega_n\|}=0,
\end{equation}
then there is $N\in\Nn$ such that, for $n\geq N$,
\begin{equation}\label{control on Rn ell for vf}
R_n \leq \frac{\rho_n}{\|P_n\|},
\qquad
\frac{42\|P_n^{-1}\|\, \varepsilon_n}{\sigma_{n}}
\leq \frac{R_{n-1}-R_n}{2\pi}
\end{equation}
and
\begin{equation}\label{estimates lim conv}
\lim_{n\to+\infty}
\frac{\|P_n^{-1}\|\,\varepsilon_n
}{R_n(R_{n-1}-R_n)\sigma_n} = 0,
\qquad
\lim_{n\to+\infty}
\frac{\|P_n^{-1}\|\,\varepsilon_n}{|\lambda_n|} = 0.
\end{equation}
\end{lemma}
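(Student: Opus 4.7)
The plan is to reduce every one of the four assertions to the single quantitative hypothesis \eqref{cdn on sigma wrt Rn}, together with two structural facts that hold automatically: first, since each $T_k\in\SL(d,\Zz)$ has determinant one, $\|T_k\|\geq1$, so $R_n\leq 2^{-n}$ and, more importantly,
$$\frac{R_n}{R_{n-1}}=\frac{1}{2\|T_n\|}\leq\frac12,$$
which gives $R_{n-1}-R_n\geq R_n$. Consequently the hypothesis \eqref{cdn on sigma wrt Rn} together with $R_n\to 0$ already implies the apparently stronger statements $\sigma_n\|P_n^{-1}\|/(R_n^2\|\vecomega_n\|)\to 0$, $\sigma_n\|P_n^{-1}\|/(R_n\|\vecomega_n\|)\to 0$ and $\sigma_n\|P_n^{-1}\|/((R_{n-1}-R_n)\|\vecomega_n\|)\to 0$, all three of which will be used below.

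For the first inequality in \eqref{control on Rn ell for vf}, I will use that, by the standing assumption from Theorem~\ref{convergence Rn}, $\rho-\BB(\vecomega,\sigma)>0$. Since $\log(\e^{\delta+\nu/A_i}\phi_{i+1})\geq\delta$ (because $\phi_{i+1}\geq 1$ and $\nu/A_i>0$), the definition \eqref{def B omega} gives
$$\BB(\vecomega,\sigma)\geq\delta\sum_{i\geq 0}A_0\cdots A_i,$$
so the tail $A_0\cdots A_{n-1}\to 0$. Combined with the identity $\|P_n\|\leq\|T_0\|\cdots\|T_n\|=1/(2^n R_n)$, this yields
$$R_n\|P_n\|(A_0\cdots A_{n-1})\leq 2^{-n}(A_0\cdots A_{n-1})\longrightarrow 0,$$
which, using $\rho_n\geq(\rho-\BB_n)/(A_0\cdots A_{n-1})$ and $\rho-\BB_n\geq \rho-\BB(\vecomega,\sigma)$, is precisely the bound $R_n\leq\rho_n/\|P_n\|$ for large $n$.

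For the second inequality in \eqref{control on Rn ell for vf} and for both limits in \eqref{estimates lim conv} I will exploit the explicit formula \eqref{formula epsilon} for $\varepsilon_n$. Since $\sigma_n\leq\|\vecomega_n\|$, the minimum there is bounded by its second argument, giving
$$\varepsilon_n\leq\frac{\sigma_n^2}{42\cdot 72\,\|\vecomega_n\|}.$$
Plugging this into $42\|P_n^{-1}\|\varepsilon_n/\sigma_n$ and comparing with $(R_{n-1}-R_n)/(2\pi)$ reduces the desired inequality to $\sigma_n\|P_n^{-1}\|/((R_{n-1}-R_n)\|\vecomega_n\|)\leq 36/\pi$, which holds eventually by the first reduction. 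The same bound on $\varepsilon_n$ yields $\|P_n^{-1}\|\varepsilon_n/(R_n(R_{n-1}-R_n)\sigma_n)$ equal to a constant times the left side of \eqref{cdn on sigma wrt Rn}, so the first limit in \eqref{estimates lim conv} is immediate.

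The main technical point is the last limit, $\|P_n^{-1}\|\varepsilon_n/|\lambda_n|\to 0$, which I will handle by lower-bounding $|\lambda_n|$. From $\vecomega_n=\lambda_n P_n\vecomega$ one gets $|\lambda_n|\geq\|\vecomega_n\|/(\|P_n\|\|\vecomega\|)$, hence, using $\|P_n\|\leq 1/(2^n R_n)$,
$$\frac{\|P_n^{-1}\|\varepsilon_n}{|\lambda_n|}\leq\frac{\|\vecomega\|\,\|P_n^{-1}\|\varepsilon_n}{2^n R_n\,\|\vecomega_n\|}\leq\frac{\|\vecomega\|}{42\cdot 72\cdot 2^n}\cdot\frac{\sigma_n\|P_n^{-1}\|}{R_n\|\vecomega_n\|}\cdot\frac{\sigma_n}{\|\vecomega_n\|},$$
where the last factor is bounded by $1$ and the middle factor tends to zero by the reduction of \eqref{cdn on sigma wrt Rn}. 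The only mild obstacle I foresee is keeping track of which specific consequence of \eqref{cdn on sigma wrt Rn} is invoked at each step; once the equivalent forms at the start are recorded, the remaining verifications are routine algebra with the bounds $\varepsilon_n\lesssim\sigma_n^2/\|\vecomega_n\|$, $\|P_n\|\leq 1/(2^nR_n)$ and $R_{n-1}-R_n\geq R_n$.
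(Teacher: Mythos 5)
Your argument is essentially the same as the paper's: both deduce the first inequality from $R_n\|P_n\|\leq 2^{-n}$ and $\rho-\BB_n\geq\rho-\BB>0$ together with the (bounded, hence convergent) products $A_0\cdots A_{n-1}$; both obtain the second inequality and the first limit from the crude bound $\varepsilon_n\leq\sigma_n^2/\|\vecomega_n\|$ fed into \eqref{cdn on sigma wrt Rn}; and both handle the last limit via $|\lambda_n|^{-1}\leq\|P_n\|\|\vecomega\|/\|\vecomega_n\|$ and $\|P_n\|\leq 1/(2^nR_n)$. So the route is the paper's.

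One remark worth making, although it does not damage the proof. In your opening reduction you claim that \eqref{cdn on sigma wrt Rn} \emph{implies} $\sigma_n\|P_n^{-1}\|/(R_n^2\|\vecomega_n\|)\to 0$. This does not follow: the inequality $R_{n-1}-R_n\geq R_n$ gives $R_n(R_{n-1}-R_n)\geq R_n^2$, so this quantity exceeds the one in \eqref{cdn on sigma wrt Rn} by the factor $(R_{n-1}-R_n)/R_n=2\|T_n\|-1$, which may be unbounded. The implication runs the other way. By contrast, the two consequences you actually invoke, $\sigma_n\|P_n^{-1}\|/(R_n\|\vecomega_n\|)\to 0$ and $\sigma_n\|P_n^{-1}\|/((R_{n-1}-R_n)\|\vecomega_n\|)\to 0$, do follow from \eqref{cdn on sigma wrt Rn} (using $R_{n-1}-R_n\leq 1$ and $R_n\leq 1$ respectively), and these suffice for every step. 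So the spurious third "reduction" should simply be dropped; the remainder of the argument is sound.
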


\begin{proof}
For the first estimate in \eqref{control on Rn ell for vf} it is sufficient to check that $\inf_k\rho_k>0$. 
Indeed we get the bound for $N$ large enough such that $2^{-N}\leq\inf_k\rho_k$.
Now, as $\BB_n\leq \BB$ (notice the simplification of notation) we have that $\rho-\BB_n\geq\rho-\BB>0$. Hence we only need to check that $\sup_n A_0\dots A_{n-1}<\infty$. This follows from $\sum_{n\geq0} A_0\dots A_{n-1}< \delta^{-1}\BB<\infty$, so that $\lim_{n\to+\infty}A_0\dots A_{n-1}=0$.

 From \eqref{cdn on sigma wrt Rn} for any choice of $\kappa>0$ we can find $N\in\Nn$ so that for $n\geq N$,
$$
\sigma_n\leq \kappa \frac{(R_{n-1}-R_n)\|\vecomega_n\|}{\|P_n^{-1}\|}.
$$
Therefore, the second estimate in \eqref{control on Rn ell for vf} follows using the definition of $\varepsilon_n$, i.e. $\varepsilon_n\leq \sigma_n^2/\|\vecomega_n\|$.
This also proves the first limit in \eqref{estimates lim conv}.

Finally, notice that \eqref{eq def omega n} yields
$$
|\lambda_n|^{-1}\leq \|P_n\|\,\|\vecomega\| \,\|\vecomega_n\|^{-1} \leq \|T_0\|\dots\|T_n\|\,\|\vecomega\|\,\|\vecomega_n\|^{-1}.
$$
So,
$$
\frac{\|P_n^{-1}\|\,\varepsilon_n}{|\lambda_n|} \ll
\frac{\|P_n^{-1}\|\sigma_n^2}{R_n\|\vecomega_n\|^2}.
$$
We then obtain the second limit in \eqref{estimates lim conv} from \eqref{cdn on sigma wrt Rn}.
\end{proof}

\begin{remark}
It is simple to check that $\|g\circ P_n \|_{R_n} \leq \|g\|_{\rho_n}$.
\end{remark}


\section{Class of vectors}
\label{section:class vectors}

\begin{theorem}\label{thm class of vectors}
If $\vecomega\in\YY$, then there exists $\sigma\colon\Nn_0\to\Rr^+$ in the conditions of Lemma {\rm \ref{lemma cdns Rn}} such that $\BB(\vecomega,\sigma)<\infty$.
\end{theorem}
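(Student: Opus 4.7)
The plan is to extract the sequences $\sigma$ and $t$ guaranteed by $\vecomega\in\YY$, enforce the mild constraint $\sigma_n\le\|\vecomega_n\|$, and then verify the outstanding hypothesis of Lemma \ref{lemma cdns Rn} together with the bound $\BB(\vecomega,\sigma)<\infty$. Replacing $\sigma_n$ by $\min\{\sigma_n,\|\vecomega_n\|\}$ if necessary only shrinks each cone $I_n^+$, so by Proposition \ref{lemma resonance cone} every $A_n$ can only decrease, and the conditions \eqref{defn YY vectors}--\eqref{cdn on sigma n for YY} are preserved. The first hypothesis of Lemma \ref{lemma cdns Rn} is then built in.

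To verify \eqref{cdn on sigma wrt Rn}, I use $T_n\in\SL(d,\Zz)$, so $\|T_n\|\ge 1$ and $R_{n-1}=2R_n\|T_n\|\ge 2R_n$. Hence $R_n(R_{n-1}-R_n)\ge R_n^2=4^{-n}(\|T_0\|\cdots\|T_n\|)^{-2}$, and since $\|\vecomega_n\|\ge 1$ the expression inside the limit in \eqref{cdn on sigma wrt Rn} is majorised by a constant multiple of $4^n\sigma_n\|P_n^{-1}\|(\|T_0\|\cdots\|T_n\|)^2$, which tends to zero by \eqref{cdn on sigma n for YY}.

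For $\BB(\vecomega,\sigma)<\infty$ I split
\[
\BB_n=\sum_{i=0}^{n-1} A_0\cdots A_i\left(\delta+\frac{\nu}{A_i}+\log\phi_{i+1}\right).
\]
From \eqref{formula epsilon}, the bound $\sigma_n\le\|\vecomega_n\|$ makes the minimum there attained by $\sigma_n/(72\|\vecomega_n\|)$, so $\varepsilon_n$ is comparable to $\sigma_n^2/\|\vecomega_n\|$; combined with Lemma \ref{proposition TT} this yields
\[
\log\phi_{i+1}\ll 1+\log\!\left(|\eta_{i+1}|\,\|T_{i+1}\|\,\frac{\sigma_i\|\vecomega_{i+1}\|}{\sigma_{i+1}}\right)+\log\frac{\sigma_i}{\sigma_{i+1}\|\vecomega_i\|}.
\]
The middle term summed against $A_0\cdots A_i$ converges directly by \eqref{defn YY vectors}. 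Since each term of \eqref{defn YY vectors} is bounded below by a positive multiple of $A_0\cdots A_i$ (after trivially inflating $\sigma$ to ensure the log argument exceeds a fixed constant), the bare series $\sum A_0\cdots A_i$ is also finite, which absorbs the constant and the $\nu/A_i$ contributions.

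The main obstacle is the last logarithm, $\log(\sigma_i/(\sigma_{i+1}\|\vecomega_i\|))$, which reflects the squared ratio $(\sigma_i/\sigma_{i+1})^2$ appearing in $\varepsilon_i/\varepsilon_{i+1}$ against only a single power of $\sigma_i/\sigma_{i+1}$ in \eqref{defn YY vectors}. I expect to control it by exploiting the freedom in the choice of $\sigma$ within the class $\YY$, arranging that $\log(\sigma_i/\sigma_{i+1})$ is dominated by $\log\|T_{i+1}\|$ (so that the offending log is comparable to the middle log above and again falls under \eqref{defn YY vectors}); the fact that this balancing can be achieved is illustrated by the explicit construction in the proof of Proposition \ref{prop DC in YY}, and constitutes the heart of the argument.
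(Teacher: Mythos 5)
Your proposal follows the same route as the paper's proof: extract $\sigma$ and $t$ from the definition of $\YY$, verify the hypotheses of Lemma \ref{lemma cdns Rn}, and estimate $\BB(\vecomega,\sigma)$ by bounding $\log\phi_{n+1}$ via Lemma \ref{proposition TT} and formula \eqref{formula epsilon}. Your verification of \eqref{cdn on sigma wrt Rn}, using $R_{n-1}-R_n\ge R_n$ (from $\|T_n\|\ge1$) together with $\|\vecomega_n\|\ge1$, is correct and more explicit than the paper's one-sentence assertion.

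The step you flag as ``the heart of the argument'' --- absorbing the squared ratio $\sigma_n^2/\sigma_{n+1}^2$ arising from $\varepsilon_n/\varepsilon_{n+1}$ into the single ratio $\sigma_n/\sigma_{n+1}$ of \eqref{defn YY vectors} --- is left unresolved, so as written your argument does not close. This, however, is precisely where the paper's own proof is tersest: after recording $\phi_{n+1}\ll|\eta_{n+1}|\,\|T_{n+1}\|\,\|\vecomega_n\|\,\sigma_n^2/\sigma_{n+1}^2$, the paper directly asserts $\BB(\vecomega,\sigma)\ll\sum A_0\cdots A_n\log\!\big(|\eta_{n+1}|\,\|T_{n+1}\|\,\sigma_n\|\vecomega_{n+1}\|/\sigma_{n+1}\big)$ without explaining the passage, and without separately treating the $\delta+\nu/A_n$ contributions. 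Your appeal to ``freedom in the choice of $\sigma$'' is not the intended mechanism and would not obviously work, since re-choosing $\sigma$ changes $A_n$ and hence the $\YY$-series itself.

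The way to close it, keeping $\sigma$ fixed, is this. Write $Y_n$ for the argument of the logarithm in \eqref{defn YY vectors} and $X_n=\sigma_n/\sigma_{n+1}\ge1$. Then $\log\phi_{n+1}\ll\log Y_n+\log X_n$ (using $\|\vecomega_n\|\ge1$), and $\log X_n=\log Y_n-\log(|\eta_{n+1}|\|T_{n+1}\|\|\vecomega_{n+1}\|)$. From $\vecomega_{n+1}=\eta_{n+1}T_{n+1}\vecomega_n$ one gets $|\eta_{n+1}|\|T_{n+1}\|\ge\|\vecomega_{n+1}\|/\|\vecomega_n\|$, hence $-\log(|\eta_{n+1}|\|T_{n+1}\|\|\vecomega_{n+1}\|)\le\log\|\vecomega_n\|-\log\|\vecomega_{n+1}\|$, so $\log\phi_{n+1}\ll 2\log Y_n+(\log\|\vecomega_n\|-\log\|\vecomega_{n+1}\|)$. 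Summing against $A_0\cdots A_n$, the first part is a constant multiple of the convergent $\YY$-series, and the second telescopes by Abel summation using $A_n<1$ for large $n$. Finally, for the $\delta+\nu/A_n$ terms one also needs $\sum A_0\cdots A_n<\infty$, which follows from \eqref{defn YY vectors} once the terms there are interpreted as eventually bounded below by a positive constant; this positivity is an implicit convention in the definition of $\YY$. You correctly identified the crux; it can be filled, but by this accounting, not by re-tuning $\sigma$.
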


\begin{proof}
We need to estimate $\BB(\vecomega,\sigma)$. We start by noticing that
$$
\phi_{n+1}\ll
|\eta_{n+1}|\,\|T_{n+1}\| \, \frac{\varepsilon_n}{\varepsilon_{n+1}}
\ll
|\eta_{n+1}|\,\|T_{n+1}\| \, \|\vecomega_n\| \, \frac{\sigma_n^2}{\sigma_{n+1}^2}.
$$
From \eqref{cdn on sigma n for YY} we have $\sigma$ satisfying \eqref{cdn on sigma wrt Rn}. So,
$$
\BB(\vecomega,\sigma)\ll
\sum_{n\geq 0} 
A_0\dots A_n \log \left(|\eta_{n+1}|\,\|T_{n+1}\|\,\frac{\sigma_n\|\vecomega_{n+1}\|}{\sigma_{n+1}}\right)
$$
converges whenever $\vecomega\in\YY$ as in \eqref{defn YY vectors}.
\end{proof}

\section{Analytic conjugacy}
\label{section:Differentiable rigidity}

\subsection{$C^1$-conjugacy}
\label{subsection:from C0 to C1 conjugacy}

Starting from a vector field satisfying $\rot X=\vecomega$, we first use Theorem \ref{convergence Rn} to show that there is a smooth conjugacy.
Let $\Delta\subset\A'_\rho$ be the subset of all infinitely renormalizable real-analytic vector fields that verify the conditions of Theorem \ref{convergence Rn} while \eqref{cdn on sigma wrt Rn} holds.
Moreover, we fix $N\in\Nn$ as in Lemma \ref{lemma cdns Rn}.

We use the notation $\Diff_{per}^r$ for a set of isotopic to the identity $\Zz^d$-periodic $C^r$-diffeomorphisms, i.e. identify $\Diff_{per}^r(\Rr^d)$ with $\Diff_0^r(\Tt^d)$.

\begin{lemma}\label{lemma Wn-id}
For all $n\geq N$,
$W_n\colon \Delta\to\Diff_{per}^\omega(D_{R_n},D_{R_{n-1}})$ is analytic satisfying
\begin{equation}\label{norm of Wn-I in Drho n+1}
\|W_n(X)-\id\|_{R_n}
\leq 
\frac{42}{\sigma_{n}} \|P_n^{-1}\|\, \|X_{n}-\vecomega_n\|'_{\rho_n}.
\end{equation}
\end{lemma}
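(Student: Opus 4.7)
The plan is to unpack the definition $W_n(X) = P_n^{-1} \circ U_n(X) \circ P_n$ with $U_n(X) = \fU_n(X_n)$, pull the bound for $\fU_n(X_n) - \id$ from Theorem~\ref{thm unif} back through conjugation by $P_n$, and then verify that the image really lands in $\Diff_{per}^\omega(D_{R_n}, D_{R_{n-1}})$ using the two inequalities collected in \eqref{control on Rn ell for vf}.

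First I would apply Theorem~\ref{thm unif} at $t=1$ to $X_n$, which, since $X \in \Delta$ implies $\|X_n - \vecomega_n\|'_{\rho_n} < \varepsilon_n$ by \eqref{bound on LLn less than epsilon}, gives
\begin{equation*}
\|U_n(X) - \id\|'_{\rho_n} \leq \frac{42}{\sigma_n}\|\Ii_n^- X_n\|_{\rho_n} = \frac{42}{\sigma_n}\|\Ii_n^-(X_n - \vecomega_n)\|_{\rho_n} \leq \frac{42}{\sigma_n}\|X_n - \vecomega_n\|'_{\rho_n},
\end{equation*}
using that $\vecomega_n$ is a resonant constant mode, hence killed by $\Ii_n^-$. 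Now from $W_n(X)(\vecx) - \vecx = P_n^{-1}\bigl((U_n(X) - \id)\circ P_n(\vecx)\bigr)$, the operator-norm estimate for $P_n^{-1}$ and the remark that $\|g \circ P_n\|_{R_n} \leq \|g\|_{\rho_n}$ (which is valid because the first half of \eqref{control on Rn ell for vf} gives $R_n \|P_n\| \leq \rho_n$) combine to yield
\begin{equation*}
\|W_n(X) - \id\|_{R_n} \leq \|P_n^{-1}\| \cdot \|U_n(X) - \id\|_{\rho_n} \leq \frac{42}{\sigma_n}\|P_n^{-1}\|\,\|X_n - \vecomega_n\|'_{\rho_n},
\end{equation*}
which is the claimed estimate.

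Next I would check the codomain condition $W_n(X)(D_{R_n}) \subset D_{R_{n-1}}$. For $\vecx \in D_{R_n}$ the inequality just proved together with \eqref{bound on LLn less than epsilon} and the second half of \eqref{control on Rn ell for vf} gives $\|W_n(X) - \id\|_{R_n} \leq 42\|P_n^{-1}\|\varepsilon_n/\sigma_n \leq (R_{n-1} - R_n)/(2\pi)$, so the imaginary part is controlled by $\|\Im\vecx\| + (R_{n-1}-R_n)/(2\pi) < R_{n-1}/(2\pi)$. The $\Zz^d$-periodicity of $W_n(X) - \id$ follows from $P_n \in \SL(d,\Zz)$: for $\veck\in\Zz^d$ we have $P_n\veck \in \Zz^d$, so $U_n(X)(P_n\vecx + P_n\veck) = U_n(X)(P_n\vecx) + P_n\veck$ because $U_n(X) - \id$ is $\Zz^d$-periodic by construction in Theorem~\ref{thm unif}, and applying $P_n^{-1}$ yields $W_n(X)(\vecx + \veck) = W_n(X)(\vecx) + \veck$. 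Finally, $W_n(X)$ is a biholomorphism onto its image by the inverse function theorem since $\|D(W_n(X) - \id)\|$ is small by Cauchy's estimate, and real-analyticity on $\Rr^d$ follows from the real-analyticity of $U_n(X)$ already noted after \eqref{formula X n with Vs}.

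The analyticity of the map $W_n \colon \Delta \to \Diff_{per}^\omega(D_{R_n}, D_{R_{n-1}})$ is then inherited from the analyticity of $X \mapsto \fU_n(X_n)$, which combines the analyticity of $\RR_n$ (remarked after its definition) with that of $\fU_n$ (from Theorem~\ref{thm unif}), and composition/inversion by the fixed element $P_n \in \SL(d,\Zz)$. The only step that requires care is the transition from the $\A'_{\rho_n}$-norm bound on $U_n(X) - \id$ to the $\|\cdot\|_{R_n}$-norm bound on $W_n(X) - \id$; everything else is bookkeeping. The main subtlety is thus keeping track of which of the Lemma~\ref{lemma cdns Rn} inequalities is used where — the first to secure that $P_n(D_{R_n}) \subset D_{\rho_n}$ so the composition makes sense, and the second to secure that the image stays in $D_{R_{n-1}}$.
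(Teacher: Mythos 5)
Your proof is correct and follows essentially the same route as the paper: you invoke the bound \eqref{estimate U around X0} from Theorem~\ref{thm unif} together with \eqref{bound on LLn less than epsilon}, transport it through the conjugation by $P_n$ using the first inequality of \eqref{control on Rn ell for vf}, and then verify the codomain $D_{R_{n-1}}$ via the second inequality, exactly as the paper does. The extra detail you supply on $\Zz^d$-periodicity of $W_n(X)-\id$, invertibility, and the analyticity of $X\mapsto W_n(X)$ is compatible with what the paper compresses into the phrase ``From the properties of $\fU_n$.''
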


\begin{proof}
For any $X\in\Delta$, by \eqref{estimate U around X0} and the first inequality in \eqref{control on Rn ell for vf},
\begin{equation*}
\|W_n(X)-\id\|_{R_n} =
\|P_n^{-1}(U_n(X) -\id) P_n\|_{R_n} 
\leq
\frac{42}{\sigma_{n}} \|P_n^{-1}\|\, \|X_{n}-\vecomega_n\|'_{\rho_n}.
\end{equation*}

Now, for $\vecx\in D_{R_n}$,
\begin{equation*}
\begin{split}
\|\im W_n(X)(\vecx)\| 
& \leq 
\|\im(W_n(X)(\vecx)-\vecx)\| + \|\im \vecx\|
\\
&<
\|W_n(X)-\id\|_{R_n}+R_n/2\pi
\leq
R_{n-1}/2\pi,
\end{split}
\end{equation*}
where we have used the second inequality in \eqref{control on Rn ell for vf}. Hence $W_n(X)\colon D_{R_n}\to D_{R_{n-1}}$.
 From the properties of $\fU_n$, $W_n\colon\Delta\to \Diff_{per}^\omega(D_{R_n},D_{R_{n-1}})$ is analytic.
\end{proof}

For $n\geq m\geq 0$ consider the analytic map $H_{m,n}\colon \Delta\to\Diff_{per}^\omega(D_{R_n},\Cc^d)$ defined by
\begin{equation}\label{def H n}
H_{m,n}(X)=W_m(X)\circ\dots\circ W_n(X).
\end{equation}
In particular, by the above lemma, $H_{m,n}(X)\colon D_{R_n} \to D_{R_{m-1}}$ whenever $m\geq N$.

\begin{lemma}\label{lemma Hn-Hn-1 vf}
For $X\in\Delta$ and $n>m\geq N$,
\begin{equation}\label{estimate Hn-Hn-1}
\begin{split}
\|H_{m,n}(X)-H_{m,n-1}(X)\|_{R_n}
&\leq
\frac{R_{m-1}}{\frac{(R_{n-1}-R_n)\sigma_n}{2\cdot42\|P_n^{-1}\|\,\|X_n-\vecomega_n\|'_{\rho_n}}-1}
\\
\|H_{m,n}(X)-\id\|_{R_n} 
&\leq
42\sum_{i=m}^n\frac{\|P_i^{-1}\|}{\sigma_i} \|X_i-\vecomega_i\|'_{\rho_i}.
\end{split}
\end{equation}
\end{lemma}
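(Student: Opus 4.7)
Both estimates will be proved simultaneously by induction on $n - m \geq 0$, based on the Cauchy integral formula and a Banach-algebra composition estimate in $\A_\rho$. The key composition lemma, obtained by Fourier-expanding and using $\|e^f\|_\rho \leq e^{\|f\|_\rho}$ together with
\[
e^{2\pi i \veck \cdot h(\vecx)} = e^{2\pi i \veck \cdot \vecx}\, e^{2\pi i \veck \cdot (h(\vecx)-\vecx)},
\]
reads: for $\Zz^d$-periodic analytic $w$ on $D_{\rho'}$ and $h$ a lift of a torus diffeomorphism with $h - \id$ analytic and $\Zz^d$-periodic on $D_\rho$,
\[
\|w \circ h\|_\rho \leq \|w\|_{\rho + 2\pi\|h - \id\|_\rho},
\]
provided $\rho + 2\pi\|h - \id\|_\rho \leq \rho'$.

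For the second inequality I telescope
\[
H_{m,n}(X) - \id = \sum_{i=m}^n (W_i(X) - \id) \circ H_{i+1,n}(X),
\]
with $H_{n+1,n}(X) := \id$. Iterating Lemma \ref{lemma Wn-id} gives $H_{i+1,n}(X)(D_{R_n}) \subset D_{R_i}$. The inductive hypothesis applied to the shorter chain $H_{i+1,n}(X)$ bounds $\|H_{i+1,n}(X) - \id\|_{R_n}$ by $42\sum_{j=i+1}^n \|P_j^{-1}\|\,\|X_j-\vecomega_j\|'_{\rho_j}/\sigma_j$; combining with the budget $\sum_j 42\|P_j^{-1}\|\varepsilon_j/\sigma_j \leq (R_i - R_n)/(2\pi)$ furnished by Lemma \ref{lemma cdns Rn} yields $R_n + 2\pi\|H_{i+1,n}(X) - \id\|_{R_n} \leq R_i$. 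The composition estimate above with $\rho = R_n$, $\rho' = R_i$ then bounds each summand by $\|W_i(X) - \id\|_{R_i}$, and summing via Lemma \ref{lemma Wn-id} yields the claim.

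For the first inequality, fix $\vecx \in D_{R_n}$ and consider the analytic function
\[
F(t) = H_{m,n-1}(X)(\vecx + t(W_n(X)(\vecx) - \vecx)).
\]
Lemma \ref{lemma Wn-id} and the requirement that the argument stay in $D_{R_{n-1}}$ guarantee that $F$ is analytic on the disk $|t| < \alpha$, with $\alpha$ of the order $(R_{n-1} - R_n)\sigma_n/(\|P_n^{-1}\|\|X_n - \vecomega_n\|'_{\rho_n})$, matching the denominator claimed in the lemma. Writing $H_{m,n-1}(X) = \id + g$ with $g$ periodic and invoking the already-established second inequality at step $(m, n-1)$ together with Lemma \ref{lemma cdns Rn} yields $\|g\|_{R_{n-1}} \leq (R_{m-1} - R_{n-1})/(2\pi)$, so $|F(t) - F(0)|$ is controlled by a constant multiple of $R_{m-1}$ throughout the disk. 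Cauchy's integral representation
\[
F(1) - F(0) = \frac{1}{2\pi i}\oint_{|t|=r}\frac{F(t) - F(0)}{t(t-1)}\,dt, \qquad 1 < r < \alpha,
\]
followed by taking $r \to \alpha^-$ gives $|F(1) - F(0)| \leq R_{m-1}/(\alpha - 1)$, which is the first inequality.

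The principal technical point is the calibration between strip widths and displacement norms: the composition estimate enlarges the strip parameter by $2\pi$ times the displacement, and Lemma \ref{lemma cdns Rn} is precisely what guarantees these enlargements telescope within the gap $R_i - R_n$, keeping every composition inside its domain of analyticity and allowing the induction to close.
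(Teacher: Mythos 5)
Your proof follows essentially the same route as the paper. Introducing the parameter $t$ and setting $F(t)=H_{m,n-1}(X)(\vecx+t(W_n(X)(\vecx)-\vecx))$ is precisely the paper's family $G_m(z,X)$ written pointwise: unwinding the paper's recursion $G_k(z,X)=(W_k(X)-\id)\circ(\id+G_{k+1}(z,X))+G_{k+1}(z,X)$, $G_n(z,X)=z(W_n(X)-\id)$ gives $\id+G_m(z,X)=H_{m,n-1}(X)\circ(\id+z(W_n(X)-\id))$, i.e.\ $F(t)=\vecx+G_m(t,X)(\vecx)$. Your telescoping identity $H_{m,n}(X)-\id=\sum_{i=m}^{n}(W_i(X)-\id)\circ H_{i+1,n}(X)$ together with the explicit Wiener-algebra composition bound $\|w\circ h\|_\rho\le\|w\|_{\rho+2\pi\|h-\id\|_\rho}$ is a clean, explicit version of the inductive bound $\|G_k\|_{R_n}\le\|W_k-\id\|_{R_k}+\|G_{k+1}\|_{R_n}$ that the paper uses implicitly; both arguments rest on the same budget from Lemma \ref{lemma cdns Rn}, namely $42\|P_j^{-1}\|\varepsilon_j/\sigma_j\le(R_{j-1}-R_j)/2\pi$, to keep all compositions inside their domains. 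So the two proofs have the same skeleton.

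There is one technical imprecision in your proof of the first inequality. You apply Cauchy's integral formula pointwise in $\vecx$, which bounds $\sup_{\vecx\in D_{R_n}}|F(1)-F(0)|$, i.e.\ a $C^0$ estimate on $D_{R_n}$. The lemma, however, is stated in the norm $\|\cdot\|_{R_n}$, which in this paper is the Wiener (Fourier-coefficient) norm and strictly dominates the sup norm; a pointwise Cauchy bound does not recover it. The fix is immediate with tools you already have: regard $\Phi(t)=H_{m,n-1}(X)\circ(\id+t(W_n(X)-\id))$ as an $\A_{R_n}$-valued analytic map on $|t|<\alpha$, use your composition lemma to get $\|\Phi(t)-\id\|_{R_n}\le\|H_{m,n-1}(X)-\id\|_{R_{n-1}}+|t|\,\|W_n(X)-\id\|_{R_n}\le(R_{m-1}-R_n)/2\pi$, and then run the Cauchy integral in the Banach space $\A_{R_n}$; this yields the bound in the norm actually claimed. (Aside: your phrase ``matching the denominator claimed in the lemma'' is only correct up to an absolute constant, since the radius of analyticity is $\alpha=(R_{n-1}-R_n)\sigma_n/(2\pi\cdot42\|P_n^{-1}\|\,\|X_n-\vecomega_n\|'_{\rho_n})$ and the stated denominator is $\pi\alpha-1$, not $\alpha-1$; this does not affect the use of the lemma in Lemma \ref{lemma conv Hmn}.)
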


\begin{proof}
For each $k=m,\dots, n-1$, consider the transformations
\begin{equation*}
\begin{split}
G_k(z,X)= &
 (W_k(X)-\id)\circ(\id+G_{k+1}(z,X))+G_{k+1}(z,X),
\\
G_n(z,X)= &
z(W_n(X)-\id),
\end{split}
\end{equation*}
with $(z,X)\in  \{z\in\Cc\colon |z| < 1+ d_n \} \times \Delta$, where
$$
d_n=\frac{R_{n-1}-R_n}{2\pi}\frac{\sigma_n}{42\|P_n^{-1}\|\,\|X_n-\vecomega_n\|'_{\rho_n}}-1
$$
is positive by the second inequality in \eqref{control on Rn ell for vf}.
If the image of $D_{R_n}$ under $\id+G_{k+1}(z,X)$ is inside the domain of $W_k(X)$, or simply
$$
\|G_{k+1}(z,X)\|_{R_n}\leq (R_k-R_n)/2\pi,
$$
then $G_k$ is well-defined as an analytic map into $\Diff_{per}^\omega(D_{R_n},\Cc^d)$, and
\begin{equation}\label{bnd Gk}
\|G_k(z,X)\|_{R_n} \leq 
\|W_k(X)-\id\|_{R_k} + \|G_{k+1}(z,X)\|_{R_n}.
\end{equation}
An inductive scheme shows that 
\begin{equation*}
\begin{split}
\|G_n(z,X)\|_{R_n} 
= &
|z|\,\|W_n(X)-\id\|_{R_n}
\\
\leq &
(R_{n-1}-R_n)/2\pi,
\\
\|G_k(z,X)\|_{R_n} 
\leq &
\sum_{i=k}^{n-1}
\|W_i(X)-\id\|_{R_i}
+
\|G_n(z,X)\|_{R_n}
\\
\leq &
(R_{k-1}-R_{n})/2\pi,
\end{split}
\end{equation*}
using Lemmas \ref{lemma Wn-id} and \ref{lemma cdns Rn}.

It is easy to check that $G_m(1,X)=H_{m,n}(X)-\id$ and $G_m(0,X)=H_{m,n-1}(X)-\id$. In particular, from \eqref{bnd Gk} and Lemma \ref{lemma Wn-id}
\begin{equation}
\begin{split}
\|H_{m,n}(X)-\id\|_{R_n} 
&=
\|G_m(1,X)\|_{R_n}
\leq 
\sum_{i=m}^n\|W_i(X)-\id\|_{R_i} \\
&\leq
\sum_{i=m}^n\frac{42 \|P_i^{-1}\|}{\sigma_i} \|X_i-\vecomega_i\|'_{\rho_i}.
\end{split}
\end{equation}
Finally, by Cauchy's formula
\begin{equation*}
\begin{split}
\|H_{m,n}(X)-H_{m,n-1}(X)\|_{R_{n}}
&=
\|G_m(1,X)-G_m(0,X)\|_{R_{n}} \\
&=
\norm{\frac1{2\pi i} \oint_{|z|=1+d_n/2}\frac{G_m(z,X)}{z(z-1)}dz}_{R_{n}} \\
&\leq 
\frac2{d_n}\sup_{|z|=1+d_n/2}\|G_m(z,X)\|_{R_{n}} 
\leq
\frac{R_{m-1}}{\pi d_n}.
\end{split}
\end{equation*}
\end{proof}

\begin{lemma}\label{lemma conv Hmn}
There exists an analytic map $H\colon\Delta\to \Diff^1_{0}(\Tt^d)$ such that
\begin{equation}
H(X)=\lim_{n\to+\infty}H_{0,n}(X).
\end{equation}
\end{lemma}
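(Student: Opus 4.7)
The plan is to establish convergence of $H_{0,n}(X)$ to a limit $H(X)$ in $C^1(\Tt^d)$, uniformly in $X$ on small neighbourhoods in $\Delta$, and to deduce that the limit is an analytic map into $\Diff^1_0(\Tt^d)$. The argument splits naturally into a $C^0$ part (direct from Lemma \ref{lemma Hn-Hn-1 vf}) and a $C^1$ part (where more care is required), followed by a standard composition-of-near-identity argument.

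For the $C^0$ convergence, I would apply the difference estimate $\|H_{0,n}(X)-H_{0,n-1}(X)\|_{R_n}\le \rho/(\pi d_n)$ from Lemma \ref{lemma Hn-Hn-1 vf}. Using $\varepsilon_n\le \sigma_n^2/\|\vecomega_n\|$ from \eqref{formula epsilon} together with the hypothesis \eqref{cdn on sigma wrt Rn}, one gets $\|P_n^{-1}\|\varepsilon_n/((R_{n-1}-R_n)\sigma_n)=o(R_n)$, so $1/d_n=o(R_n)$. Since $R_n\to 0$ geometrically (ratio at most $1/2$), the series $\sum_n\|H_{0,n}(X)-H_{0,n-1}(X)\|_{R_n}$ converges uniformly on a neighbourhood in $\Delta$, yielding a well-defined continuous $\Zz^d$-periodic limit $H(X)$ on $\Tt^d$.

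For the $C^1$ convergence, I would show $\sum_n\|W_n(X)-\id\|_{C^1(\Tt^d)}<\infty$ and conclude via the standard fact that a composition $W_0\circ\dots\circ W_n$ of $C^1$-small perturbations of the identity converges in $C^1(\Tt^d)$ to a $C^1$-diffeomorphism isotopic to the identity. The $C^0$ part, $\|W_n-\id\|_{C^0(\Tt^d)}\le\|W_n-\id\|_{R_n}\le 42\|P_n^{-1}\|\varepsilon_n/\sigma_n$, is summable by the same computation as above. For the derivative, the key observation is that Theorem \ref{thm unif} bounds $\fU_n(X_n)-\id$ in the \emph{primed} norm, $\|\fU_n(X_n)-\id\|'_{\rho_n}\le 42\varepsilon_n/\sigma_n$, and the $(1+2\pi\|\veck\|)$ weight in $\|\cdot\|'$ directly controls the derivative on the real torus: $\|D\fU_n(X_n)-I\|_{C^0(\Tt^d)}\le\|\fU_n(X_n)-\id\|'_0\le\|\fU_n(X_n)-\id\|'_{\rho_n}$. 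This avoids the loss of a factor $1/R_n$ that a naive Cauchy estimate from $D_{R_n}$ would incur. Conjugating by $P_n$, $\|DW_n(X)-I\|_{C^0(\Tt^d)}\le 42\|P_n\|\,\|P_n^{-1}\|\varepsilon_n/\sigma_n$; using $\|P_n\|\le\rho_n/R_n$ from \eqref{control on Rn ell for vf} and $\|P_n^{-1}\|\varepsilon_n/\sigma_n=o(R_n(R_{n-1}-R_n))$, one gets a bound of order $\rho_n\cdot o(R_{n-1}-R_n)$, which is summable by telescoping since $\sum_n(R_{n-1}-R_n)\le R_0$.

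Once $C^1$-convergence is in place, the limit $H(X)$ is an isotopic to identity $C^1$-diffeomorphism of $\Tt^d$ (the derivative being an infinite product of matrices $I+A_n$ with $\sum\|A_n\|<\infty$ is invertible everywhere). Analyticity of $H\colon\Delta\to\Diff^1_0(\Tt^d)$ then follows because each finite composition $H_{0,n}$ is analytic as a map $\Delta\to\Diff^\omega_0(\Tt^d)\hookrightarrow\Diff^1_0(\Tt^d)$, and the convergence is uniform on a neighbourhood of any point of $\Delta$. The principal technical obstacle is the $C^1$ step: the crude Cauchy estimate only gives $o(1)$ per term, which is not summable; extracting summability requires the sharp estimate supplied by the primed norm in Theorem \ref{thm unif}, combined with the interplay between $\|P_n\|$, $\|P_n^{-1}\|$ and the geometric decay of $R_n$ encoded in Lemma \ref{lemma cdns Rn}.
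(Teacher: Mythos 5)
Your approach is genuinely different from the paper's. The paper works with the telescoping differences $H_{m,n}-H_{m,n-1}$, controls them on $D_{R_n}$ via the auxiliary maps $G_k$ of Lemma \ref{lemma Hn-Hn-1 vf}, and then transfers to the real torus by a Cauchy estimate on $D_{R_n/2}$, paying a factor of order $1/R_n$ which is compensated by the first limit in \eqref{estimates lim conv}. You instead bound $\|W_n(X)-\id\|_{C^1(\Tt^d)}$ term by term, using the primed-norm estimate of Theorem \ref{thm unif} to control $D\fU_n(X_n)-I$ directly at the real slice (which is a correct observation: $\|Df\|_{C^0}\le\|f\|'_0$), and then appeal to a standard composition-of-near-identity argument. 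This avoids the $1/R_n$ Cauchy loss, which is a genuinely different organization.

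However, there is a gap in your final summability step. You bound $\|DW_n(X)-I\|_{C^0}$ by $\rho_n\cdot o(R_{n-1}-R_n)$ using $\|P_n\|\le\rho_n/R_n$, and then claim summability ``by telescoping since $\sum_n(R_{n-1}-R_n)\le R_0$.'' But the factor $\rho_n$ is \emph{unbounded}: from \eqref{formula rho n vf}, $\rho_n=(\rho-\BB_n)/(A_0\cdots A_{n-1})$, and since $A_0\cdots A_{n-1}\to0$ (established in the proof of Lemma \ref{lemma cdns Rn}) while $\rho-\BB_n\ge\rho-\BB>0$, one has $\rho_n\to+\infty$. So $\sum_n\rho_n(R_{n-1}-R_n)$ does not follow from the telescoping bound. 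The fix is to replace the crude bound $\|P_n\|\le\rho_n/R_n$ by the direct one $\|P_n\|\le\|T_1\|\cdots\|T_n\|=1/(2^nR_n)$, which gives $\|DW_n(X)-I\|_{C^0}=o\bigl(2^{-n}(R_{n-1}-R_n)\bigr)$, and $\sum_n 2^{-n}(R_{n-1}-R_n)\le\sum_n2^{-n}<\infty$. With this adjustment your route goes through, but as written it does not.
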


\begin{proof}
Consider $C^1_{per}(\Rr^d,\Cc^d)$ to be the Banach space of the $\Zz^d$-periodic $C^1$ functions between $\Rr^d$ and $\Cc^d$ with norm
\begin{equation}
\|f\|_{C^1} = \max_{k\leq1}\max_{\vecx\in\Rr^d} \|D^kf(\vecx)\|.
\end{equation}

Let $m\geq N$.
Since the domains $D_{R_n}$ are shrinking, consider the restrictions of $W_n(X)$ and $H_{m,n}(X)$ to $\Rr^d$. So, for any $X\in\Delta$, Cauchy's estimate and Lemma \ref{lemma Hn-Hn-1 vf} holds that
\begin{equation}\label{estimate var H n}
\begin{split}
\|H_{m,n}(X)-H_{m,n-1}(X)\|_{C^1} 
&\leq 
\max_{k\leq1}\sup_{\vecx\in D_{R_n/2}} \|D^k [H_{m,n}(X)(\vecx)-H_{m,n-1}(X)(\vecx)]\| 
\\
&\leq C_1
\frac{1}{R_n}  \|H_{m,n}(X)-H_{m,n-1}(X)\|_{R_n}
\end{split}
\end{equation}
goes to zero by \eqref{estimate Hn-Hn-1} and the first bound in \eqref{estimates lim conv}, where $C_1>0$ is a constant. 
Thus, $H_{m,n}(X)$ converges to $H_m(X)$ in $C_{per}^1(\Rr^d,\Cc^d)$.
Similarly, there is a constant $C_2>0$ such that $\|H_m(X)-\id\|_{C^1} \leq C_2 \sigma_m^{-1}\|P_m^{-1}\|\,\|X_m-\vecomega_m\|'_{\rho_m}$. For $m$ large enough we can use the first limit in \eqref{estimates lim conv} to show that $\|H_m(X)-\id\|_{C^1}<1$. Hence, $H_m(X)$ is a diffeomorphism isotopic to the identity.

The convergence of $H_{m,n}$ is uniform in $\Delta$ so $H_m$ is an analytic map. The fact that, for real-analytic $X$, $H_m(X)$ takes real values for real arguments, follows from the same property for each $W_n(X)$.

Take now the analytic map 
$$
H=W_0\dots W_{m-1}\circ H_m.
$$
It follows from the above that for every real-analytic $X\in\Delta$, $H(X)\in\Diff_0^1(\Tt^d)$.
\end{proof}

\begin{theorem}\label{lemma conj}
For every real-analytic $X\in \Delta$, $H(X)^*(X)= \vecomega$ on $\Rr^d$.
\end{theorem}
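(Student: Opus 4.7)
The plan is to take the limit $n\to\infty$ in the identity \eqref{formula X n with Vs}. Writing composition of pullbacks, that identity says precisely that
\begin{equation*}
H_{0,n-1}(X)^{*}(X)=\lambda_{n}^{-1}\,P_{n}^{*}X_{n}
\end{equation*}
on $\Rr^{d}$ (here we already restricted all the analytic diffeomorphisms to real arguments, which is legitimate because each $W_{k}(X)$ preserves $\Rr^{d}$). The left-hand side converges to $H(X)^{*}(X)$ in $C^{0}$ by Lemma \ref{lemma conv Hmn}, since $H_{0,n-1}(X)\to H(X)$ in $C^{1}$ and $X$ is continuous with $DH(X)$ pointwise invertible. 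So it suffices to show that the right-hand side converges to the constant vector field $\vecomega$ in $C^{0}(\Rr^{d})$.

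The next step is to split
\begin{equation*}
\lambda_{n}^{-1}P_{n}^{*}X_{n}-\vecomega
=\lambda_{n}^{-1}P_{n}^{*}(X_{n}-\vecomega_{n})
+\bigl(\lambda_{n}^{-1}P_{n}^{*}\vecomega_{n}-\vecomega\bigr).
\end{equation*}
Because $\vecomega_{n}$ is a constant vector field, its pullback by the linear map $P_{n}$ is simply $P_{n}^{-1}\vecomega_{n}$, and formula \eqref{eq def omega n} gives $P_{n}^{-1}\vecomega_{n}=\lambda_{n}\vecomega$. Hence the second bracket vanishes identically and the problem reduces to estimating the first term.

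For the first term, since $P_{n}^{*}(X_{n}-\vecomega_{n})(\vecx)=P_{n}^{-1}(X_{n}-\vecomega_{n})(P_{n}\vecx)$ and $P_{n}\Rr^{d}\subset\Rr^{d}$, we obtain
\begin{equation*}
\bigl\|\lambda_{n}^{-1}P_{n}^{*}(X_{n}-\vecomega_{n})\bigr\|_{C^{0}(\Rr^{d})}
\le |\lambda_{n}|^{-1}\,\|P_{n}^{-1}\|\,\|X_{n}-\vecomega_{n}\|_{C^{0}(\Tt^{d})}
\le \frac{\|P_{n}^{-1}\|\,\varepsilon_{n}}{|\lambda_{n}|},
\end{equation*}
where in the last inequality we used $\|\cdot\|_{C^{0}}\le\|\cdot\|_{\rho_{n}}\le\|\cdot\|'_{\rho_{n}}$ together with the bound \eqref{bound on LLn less than epsilon} from Theorem \ref{convergence Rn}. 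The second limit in \eqref{estimates lim conv} (guaranteed by Lemma \ref{lemma cdns Rn}, which applies since $X\in\Delta$) now shows that this quantity tends to zero as $n\to\infty$.

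Combining these three steps, $\lambda_{n}^{-1}P_{n}^{*}X_{n}\to\vecomega$ uniformly on $\Rr^{d}$, hence $H(X)^{*}(X)=\vecomega$ on $\Rr^{d}$. The main subtlety—and the only non-routine point—is justifying that the $C^{1}$ convergence of $H_{0,n-1}(X)$ is indeed enough to pass $H_{0,n-1}(X)^{*}(X)$ to the limit in $C^{0}$; but since $DH_{0,n-1}(X)\to DH(X)$ uniformly and $DH(X)$ is invertible (because $H(X)\in\Diff^{1}_{0}(\Tt^{d})$), the inverses converge uniformly too, and continuity of $X$ takes care of the remaining factor.
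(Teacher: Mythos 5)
Your proof is correct and follows essentially the same route as the paper: invoke \eqref{formula X n with Vs} to identify $H_{0,n-1}(X)^{*}X$ with $\lambda_n^{-1}P_n^{*}X_n$, use $P_n^{-1}\vecomega_n=\lambda_n\vecomega$ from \eqref{eq def omega n} to reduce the difference to $\lambda_n^{-1}P_n^{*}(X_n-\vecomega_n)$, bound it by $|\lambda_n|^{-1}\|P_n^{-1}\|\,\varepsilon_n$, and send $n\to\infty$ via the second limit of \eqref{estimates lim conv} together with $H_{0,n}\to H$. Your only addition is spelling out the routine $C^1$-convergence argument for passing the pullback to the limit, which the paper leaves implicit.
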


\begin{proof}
 From \eqref{formula X n with Vs} we have 
$$
H_{0,n}(X)^*(X) -\vecomega=
\lambda_n^{-1} P_n^* (X_n)-\vecomega= 
\lambda_n^{-1} P_n^* (X_n-\vecomega_n).
$$
Since $\|\lambda_n^{-1} P_n^* (X_n-\vecomega_n)\|_{C^0} \leq |\lambda_n^{-1}|\,\|P_n^{-1}\|\,\|X_n-\vecomega_n\|'_{\rho_n} \to 0$ by the second limit in \eqref{estimates lim conv} and $H_{0,n}\to H$ as $n\to+\infty$, we complete the proof.
\end{proof}

\subsection{From $C^1$ to $C^\omega$-conjugacy}

Because of the analyticity dependence of the conjugacy map $H$ with respect to the vector field $X$, we will show that the conjugacy can be extended analytically to a complex strip.

\begin{lemma}\label{proposition: comm H and R}
If $X\in\Delta$ and $\vecx\in\Rr^d$, then
\begin{equation}\label{eq comm rel for H}
H(X\circ R_\vecx)= R_\vecx^{-1}\circ H(X)\circ R_\vecx.
\end{equation}
\end{lemma}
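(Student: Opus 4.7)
The plan is to establish, at each finite level $n$, the conjugation identity $W_n(X\circ R_\vecx)=R_\vecx^{-1}\circ W_n(X)\circ R_\vecx$; composing these over $n$ and passing to the $C^1$-limit via Lemma \ref{lemma conv Hmn} then yields the claim for $H$.

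First I would record how each building block of the renormalization interacts with a translation $R_\vecy$, $\vecy\in\Rr^d$. A direct pull-back computation based on Lemma \ref{theorem 2 elim terms} (using $DR_\vecy=I$) shows $\UU_n(Y\circ R_\vecy)=\UU_n(Y)\circ R_\vecy$; the identity \eqref{commutation property Rz and Ln} gives $\LL_n(Y\circ R_\vecy)=\LL_n(Y)\circ R_{T_n\vecy}$; and the analyticity cut-off $\II_n$ is just a domain restriction, so it commutes with real translations trivially. Feeding these into the definition $\RR_n=\II_n\circ\LL_n\circ\UU_{n-1}\circ\RR_{n-1}$, a short induction on $n$ gives
\begin{equation*}
\RR_n(X\circ R_\vecx)=\RR_n(X)\circ R_{P_n\vecx},\qquad n\in\Nn_0,
\end{equation*}
since the translation vector accumulates a factor $T_n$ at each step and $P_n=T_n\cdots T_1$.

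Applying Lemma \ref{theorem 2 elim terms} to the renormalized field $X_n=\RR_n(X)$ with translation vector $P_n\vecx$, I get $U_n(X\circ R_\vecx)=R_{P_n\vecx}^{-1}\circ U_n(X)\circ R_{P_n\vecx}$. Since $P_n$ is linear, $R_{P_n\vecx}\circ P_n=P_n\circ R_\vecx$, so
\begin{equation*}
W_n(X\circ R_\vecx)=P_n^{-1}\circ R_{P_n\vecx}^{-1}\circ U_n(X)\circ R_{P_n\vecx}\circ P_n=R_\vecx^{-1}\circ W_n(X)\circ R_\vecx,
\end{equation*}
as desired. Composing over the factors then gives $H_{0,n}(X\circ R_\vecx)=R_\vecx^{-1}\circ H_{0,n}(X)\circ R_\vecx$ for every $n$.

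To pass to the limit, I first need $X\circ R_\vecx\in\Delta$ so that the left-hand side is defined: its rotation vector is $\vecomega$ by Lemma \ref{properties rot}, and $\|(\Ii-\Ee)(X\circ R_\vecx)\|'_\rho=\|(\Ii-\Ee)X\|'_\rho$ because a real translation only multiplies each Fourier coefficient by the unit-modulus factor $\e^{2\pi\i\veck\cdot\vecx}$, so the hypotheses of Theorem \ref{convergence Rn} are preserved. Lemma \ref{lemma conv Hmn} then furnishes the $C^1$-limit of both sides, yielding \eqref{eq comm rel for H}. I do not anticipate a genuine obstacle: the identity is purely an equivariance property of the renormalization, and the only bookkeeping is that the translation vector is first rescaled by $P_n$ at the level of $\RR_n$ and then exactly restored by the conjugation with $P_n$ built into $W_n$.
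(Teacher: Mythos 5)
Your proof is correct and takes essentially the same route as the paper: establish the translation-equivariance of each block ($\UU_n$, $\LL_n$, $\II_n$) via Lemma \ref{theorem 2 elim terms} and \eqref{commutation property Rz and Ln}, deduce $\RR_n(X\circ R_\vecx)=\RR_n(X)\circ R_{P_n\vecx}$, convert that into $W_n(X\circ R_\vecx)=R_\vecx^{-1}\circ W_n(X)\circ R_\vecx$ using $R_{P_n\vecx}\circ P_n=P_n\circ R_\vecx$, compose, and pass to the $C^1$-limit. The one thing you add beyond the paper's argument is the explicit check that $X\circ R_\vecx\in\Delta$ (via Lemma \ref{properties rot} and the fact that a real translation leaves $\|(\Ii-\Ee)\,\cdot\,\|'_\rho$ unchanged), which the paper leaves implicit but is indeed needed for the left-hand side to be defined.
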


\begin{proof}
The relations \eqref{relation tilde U and U} 
and \eqref{commutation property Rz and Ln}
yield that $\UU_n(X\circ R_\vecx)=\UU_n(X)\circ R_\vecx$ and
$\LL_{n}(X\circ R_\vecx)=\LL_{n}(X)\circ R_{T_n\vecx}$.
This implies immediately that
\begin{equation}\label{relation Rn and Rz}
\RR_n(X\circ R_\vecx)=\RR_n(X)\circ R_{{P_n} \vecx}.
\end{equation}
Next, from a simple adaptation of \eqref{relation tilde U and U} and the formula $R_{P_n\vecz}=P_n R_\vecz {P_n}^{-1}$ for $\vecz\in\Cc^d$, we get
\begin{equation}
\begin{split}
W_n(X\circ R_\vecx) =&
P_n^{-1} \circ\fU_{ a_n}(\LL_n\RR_{n-1}(X\circ R_\vecx))\circ P_n
\\
=&
R_\vecx^{-1}\circ W_n(X)\circ R_\vecx.
\end{split}
\end{equation}
Thus, $H_{0,n}(X\circ R_\vecx)= R_\vecx^{-1}\circ H_{0,n}(X)\circ R_\vecx$.
The convergence of $H_{0,n}$ implies \eqref{eq comm rel for H}.
\end{proof}


\begin{theorem}\label{thm C1 to analytic}
For every real-analytic $X\in\Delta$, $H(X)\in\Diff_0^\omega(\Tt^d)$.
\end{theorem}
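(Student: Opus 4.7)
The plan is to exploit the analytic dependence of $H$ on $X$ established in Lemma \ref{lemma conv Hmn}, combined with the translation commutation relation of Lemma \ref{proposition: comm H and R}, to produce a holomorphic extension of $H(X)$ in the spatial variable. Fix a real-analytic $X\in\Delta$ and pick $0<\rho'<\rho$. For $\vecz\in\Cc^d$ with $\|\Im\vecz\|$ sufficiently small, the translate $X\circ R_\vecz$ lies in $\A'_{\rho'}$, and the assignment $\vecz\mapsto X\circ R_\vecz$ is a holomorphic map from an open complex neighbourhood $U\subset\Cc^d$ of $0$ into $\A'_{\rho'}$ (essentially a domain-shift of the Fourier series \eqref{F series vf} of $X$). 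Shrinking $U$ if necessary, $X\circ R_\vecz$ remains in the complex Banach-space neighbourhood of $\vecomega$ on which the construction of $H$ is defined.

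Next I would compose to obtain that $\vecz\mapsto H(X\circ R_\vecz)$ is holomorphic from $U$ into $C^1_{per}(\Rr^d,\Cc^d)$. Fixing any $\vecy\in\Rr^d$, the evaluation $\vecz\mapsto H(X\circ R_\vecz)(\vecy)\in\Cc^d$ is then holomorphic on $U$. For real $\vecz\in U$, Lemma \ref{proposition: comm H and R} rewrites this as
\[
H(X\circ R_\vecz)(\vecy)=R_\vecz^{-1}\circ H(X)\circ R_\vecz(\vecy)=H(X)(\vecy+\vecz)-\vecz.
\]
Hence the real-analytic function $\vecz\mapsto H(X)(\vecy+\vecz)$ agrees on $U\cap\Rr^d$ with the holomorphic function $\vecz\mapsto H(X\circ R_\vecz)(\vecy)+\vecz$ on $U$. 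By uniqueness of analytic continuation, $H(X)(\vecy+\cdot)$ extends holomorphically to $U$. Since $\vecy\in\Rr^d$ is arbitrary and the size of $U$ is independent of $\vecy$ (translation invariance of the argument), $H(X)$ extends holomorphically to a complex strip around $\Rr^d$; combined with the fact from Lemma \ref{lemma conv Hmn} that $H(X)$ is already an isotopic-to-identity $C^1$ diffeomorphism of $\Tt^d$ preserving $\Rr^d$, this upgrades it to an element of $\Diff_0^\omega(\Tt^d)$.

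The main technical obstacle is that Lemma \ref{lemma conv Hmn} literally asserts analyticity of $H$ only on $\Delta$, which is not itself a complex open set (the condition $\rot X=\vecomega$ is not preserved by complex perturbations of $X$). To make the composition step above rigorous, one must revisit the construction of the preceding sections and verify that every ingredient---the uniformizations $\UU_n$, the rescalings $\LL_n$, the analyticity-strip cut-offs, the coordinate changes $W_n$, and the partial compositions $H_{m,n}$---depends holomorphically on $X$ in a genuine complex ball around $\vecomega$ in $\A'_\rho$. This is in fact available, because every estimate used in Sections \ref{section:Renormalization}--\ref{section:Differentiable rigidity} rests on $\|X-\vecomega\|'_\rho$ rather than on realness or on the rotation-vector hypothesis. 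The hypothesis $\rot X=\vecomega$ is used only to place the original real-analytic $X$ inside that ball (via Proposition \ref{prop rot control EX} and Theorem \ref{convergence Rn}); once inside, Banach-space analyticity is automatic and the extension argument above goes through.
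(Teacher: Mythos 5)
Your proof takes essentially the same route as the paper's: translate by $R_\vecz$, use the commutation relation of Lemma~\ref{proposition: comm H and R} to rewrite $H(X)(\vecy+\vecz)$ as $H(X\circ R_\vecz)(\vecy)+\vecz$ for real $\vecz$, invoke analytic dependence of $H$ on its argument to make the right-hand side holomorphic in $\vecz$, and conclude by analytic continuation; the paper fixes $\vecy=0$ and writes the extension explicitly as $F(\vecz)=\vecz+H(X\circ R_\vecz)(0)$, but the content is identical.

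One small correction to your aside on the ``main technical obstacle'': it is not accurate that $\rot X=\vecomega$ is used only once to place $X$ inside the ball. In the induction of Theorem~\ref{convergence Rn} the rotation-vector hypothesis is invoked at \emph{every} step, via Proposition~\ref{prop rot control EX}, to bound $\|\Ee\LL_n(X_{n-1})-\vecomega_n\|$. What actually makes the argument go through for the complex translates $X\circ R_\vecz$ is the identity $\RR_n(X\circ R_\vecz)=\RR_n(X)\circ R_{P_n\vecz}$ from \eqref{relation Rn and Rz}, which gives $\Ee\,\RR_n(X\circ R_\vecz)=\Ee\,\RR_n(X)$; hence the inductive estimate established for the real-analytic $X$ transfers verbatim to $X\circ R_\vecz$, and the renormalization (and therefore $H$) is well defined and analytic along the complex curve $\vecz\mapsto X\circ R_\vecz$. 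This is the precise reason the Banach-space analyticity of Lemma~\ref{lemma conv Hmn} can be applied in the extension step.
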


\begin{proof}
Take $r$ such that $\rho>r>\BB(\vecomega,\sigma)$. 
For each $X\in\Delta$ and $\vecz\in D_\eta$ with $\eta=\rho-r>0$, $X\circ R_\vecz\in \A'_r$ and $\|X\circ R_\vecz-\vecomega\|_r\leq \|X-\vecomega\|_\rho$. So, we define the set of infinitely renormalizable vector fields $\Delta_r\subset\A'_r$ and can use the previous results to show the existence of the conjugacy $H(X\circ R_\vecz)$.

We now need to analytically extend the conjugacy $H(X)$ to a complex neighbourhood of $\Rr^d$.
Let $F(\vecz)=\vecz+ H(X\circ R_\vecz)(0)$.
The maps $\vecz\mapsto X\circ R_\vecz$, $X\mapsto H(X)$ and $C^1_{per}(\Rr^d,\Cc^d)\ni g\mapsto g(0)$ are analytic, thus $F$ is also analytic on $D_\eta$ and $F-\id$ is $\Zz^d$-periodic.
It remains to show that $F$ is an analytic extension of $H(X)$.
 From \eqref{eq comm rel for H}, for
any $\vecx\in\Rr^d$, we have
\begin{equation}
\begin{split}
F(\vecx)
& = 
\vecx+R_\vecx^{-1}\circ H(X)\circ R_\vecx(0) \\
& = 
\vecx+H(X)(\vecx)-\vecx \\
& = 
H(X)(\vecx).
\end{split}
\end{equation}
\end{proof}


\section{Small analyticity strip}\label{section:proof local rig thm}

The above results (viz. Theorems \ref{convergence Rn} and \ref{thm C1 to analytic}) can be generalized for a small analyticity radius $\rho$. Thus, through Theorem \ref{thm class of vectors}, we conclude the proof of Theorem \ref{main thm I}.

\begin{theorem}\label{thm small analyt radius}
Let $\vecomega\in\YY$.
If $X\in\A_r$, $r>0$, $\rot X=\vecomega$ and $X$ is sufficiently close to constant, then there is an analytic diffeomorphism $\psi$ such that $\widetilde X=\psi^*X$ is in the conditions of Theorem {\rm \ref{convergence Rn}}, thus infinitely renormalizable.
\end{theorem}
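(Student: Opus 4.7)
The strategy is to perform a finite number $M$ of preliminary renormalization steps and to package the associated coordinate changes into the analytic diffeomorphism $\psi$, so that $\widetilde X=\psi^*X$ satisfies the hypotheses of Theorem \ref{convergence Rn} with respect to the new rotation vector $\vecomega_M$.

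First, since $\vecomega\in\YY$, Theorem \ref{thm class of vectors} provides a resonance-width sequence $\sigma$ and a stopping-time sequence so that $\BB(\vecomega,\sigma)<\infty$ and the conditions of Lemma \ref{lemma cdns Rn} are satisfied. Because $\vecomega_M\in\YY$ for every $M\in\Nn$, the shifted sequence $\sigma^{(M)}_j:=\sigma_{M+j}$ gives $\BB(\vecomega_M,\sigma^{(M)})<\infty$ as well. Next I would apply the first $M$ renormalization steps to define $\widetilde X=\RR_M(X)\in\A'_{\rho_M}$, where
\begin{equation*}
\rho_M=\frac{r-\BB_M(\vecomega,\sigma)}{A_0\cdots A_{M-1}},
\end{equation*}
and set $\psi$ to be the composition of the uniformizing diffeomorphisms $W_0(X),\ldots,W_{M-1}(X)$ with the $\SL(d,\Zz)$-matrix $P_M^{-1}$, so that (after absorbing the scalar time-rescaling $\lambda_M$ into the normalization of the rotation vector) $\widetilde X=\psi^*X$. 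By construction $\psi$ is a real-analytic diffeomorphism.

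The verification of Theorem \ref{convergence Rn}'s hypotheses for $\widetilde X$ with rotation vector $\vecomega_M$ reduces, via the decomposition
\begin{equation*}
\BB(\vecomega,\sigma)=\BB_M(\vecomega,\sigma)+A_0\cdots A_{M-1}\,\BB(\vecomega_M,\sigma^{(M)}),
\end{equation*}
to the single analytic-radius condition $r>\BB(\vecomega,\sigma)$. The closeness-to-constant bound $\|(\Ii-\Ee)\widetilde X\|'_{\rho_M}<\varepsilon_M/(d+1)$ then follows by inductively applying \eqref{bound on LLn less than epsilon} to the first $M$ preliminary steps, provided $X$ is sufficiently close to $\vecomega$.

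The main obstacle --- and the technical heart of the proof --- is to show that the internal parameters $\nu$ of Theorem \ref{thm unif} and $\delta$ of Lemma \ref{proposition TT}, together with $\sigma$, can be jointly chosen so that $\BB(\vecomega,\sigma)<r$. Splitting
\begin{equation*}
\BB(\vecomega,\sigma)=\delta\sum_{i\geq 0}A_0\cdots A_i+\nu\sum_{i\geq 0}A_0\cdots A_{i-1}+\sum_{i\geq 0}A_0\cdots A_i\log\phi_{i+1},
\end{equation*}
the first two sums tend to zero linearly in $\delta,\nu$. The factor $\|\widetilde\LL_n\|\leq|\eta_n|\|T_n\|(1+2\pi/\delta)$ of Lemma \ref{proposition TT} couples $\log\phi_n$ to $\delta$ only logarithmically, so a sufficiently small balanced choice of $\nu,\delta$ together with a modification of $\sigma$ on a finite initial segment --- which does not disturb the asymptotic $\YY$-conditions --- brings $\BB(\vecomega,\sigma)$ below any prescribed $r>0$. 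Once this is achieved, direct application of Theorem \ref{convergence Rn} to $\widetilde X$ yields its infinite renormalizability.
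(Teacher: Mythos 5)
Your overall outline (finitely many preliminary renormalization steps absorbed into $\psi$, then apply Theorem \ref{convergence Rn} to the result) matches the paper's, but at the decisive point your argument becomes circular and the step you call the ``technical heart'' does not go through. Because you define the preliminary steps as $\RR_M$, i.e.\ \emph{including} the cut-off operators $\II_n$, your rescaled radius is $\rho_M=(r-\BB_M(\vecomega,\sigma))/(A_0\cdots A_{M-1})$ while $\BB(\vecomega_M,\sigma^{(M)})=(\BB-\BB_M)(\vecomega,\sigma)/(A_0\cdots A_{M-1})$; the condition $\rho_M>\BB(\vecomega_M,\sigma^{(M)})$ is therefore exactly equivalent to $r>\BB(\vecomega,\sigma)$ for every $M$, so the preliminary steps buy you nothing and the entire burden falls on forcing $\BB(\vecomega,\sigma)<r$. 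That cannot be done by the means you describe. The term $\sum_i A_0\cdots A_i\log\phi_{i+1}$ is not linear in any small parameter: $\phi_{i+1}$ contains $\|\widetilde\LL_{i+1}\|\leq|\eta_{i+1}|\,\|T_{i+1}\|(1+2\pi/\delta)$, so shrinking $\delta$ to kill the first sum inflates $\log\phi_{i+1}$ by $\log(1/\delta)$, and since $\sum_i A_0\cdots A_i\geq A_0>0$ is bounded below by data fixed by $\vecomega$ and the stopping times, the combination $\delta\sum A_0\cdots A_i+\log(1/\delta)\sum A_0\cdots A_i$ is bounded away from $0$ uniformly in $\delta$. Modifying $\sigma$ on a finite initial segment does not help either, because $A_i$ is bounded below by the contraction of $\trans T_{i+1}^{-1}$ on the hyperplane orthogonal to $\vecomega_i$, independently of $\sigma_i$. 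So ``$\BB(\vecomega,\sigma)<r$ for any prescribed $r>0$'' is unsubstantiated and in general false.

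The missing idea is that the preliminary steps must be performed \emph{without} the analyticity-strip cut-off: the paper takes $\widetilde X=\LL_N\UU_{N-1}\cdots\LL_1\UU_0(X)$, with no $\II_n$ and hence no $\phi_n$, so the strip-width recursion loses only $\delta+\nu/A_i$ per step and the numerator $\rho-\sum_{i<N}A_0\cdots A_i(\delta+\nu/A_i)$ stays bounded below by a positive constant once $\delta,\nu$ are chosen small (these losses really are linear in $\delta,\nu$). Then $\rho_N\gg 1/(A_0\cdots A_{N-1})$ while $\BB(\vecomega_N,\sigma')=(\BB-\BB_N)/(A_0\cdots A_{N-1})$ with $\BB-\BB_N\to0$, so $\rho_N>\BB(\vecomega_N,\sigma')$ for $N$ large, and Theorem \ref{convergence Rn} applies to $\widetilde X$ with the shifted sequences. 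You should restructure your proof around this omission of the cut-off in the preliminary stage; the rest of your argument (the shift of $\sigma$, the closeness-to-constant estimate via \eqref{bound on LLn less than epsilon}, the analyticity of $\psi$) is then fine.
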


\begin{proof}
First, we observe that $X\in\A'_\rho$ for some $\rho<r$ because $\|X\|'_\rho\leq(1+2\pi/(r-\rho))\|X\|_r$.
Moreover, by Proposition \ref{prop rot control EX},
$$
\|X-\vecomega\|'_\rho  \leq
(d+1) \|(\Ii-\Ee)X\|'_\rho.
$$

By considering a sufficiently large $N$ we want to apply Theorem \ref{convergence Rn} to 
$$
\widetilde X=\LL_N\UU_{N-1}\dots\LL_1\UU_0(X)
$$ 
in $\A'_{\rho_N}$, with $\|(\Ii-\Ee)X\|'_\rho$ small enough. (Notice that we are not including any operator $\II$, thus no need for $\phi$.)
Under a suitable choice of resonance width and stopping-time sequences up to step $N$, we recover the large strip case since 
\begin{equation}\label{formula rho N}
\rho_N=
\frac{\rho-\sum_{i=0}^{N-1}A_0\cdots A_i(\delta+\nu/A_i)}{A_0\cdots A_{N-1}}.
\end{equation}
The numerator above can be made larger than some positive constant for any choice of a finite $N$.
It remains to check that $\rho_N>\BB(\vecomega_N,\sigma')$, now for $\sigma'_n=\sigma_{N+n}$ given by Theorem \ref{thm class of vectors}.
As
\begin{equation}\label{est BB N}
\BB(\vecomega_N,\sigma') =
\frac{(\BB-\BB_N)(\vecomega,\sigma)}{A_0\dots A_{N-1}}
\end{equation}
we compare \eqref{est BB N} with \eqref{formula rho N} by noticing that $\BB_N\to\BB$ as $N\to+\infty$.
So, for $N$ sufficiently large (but finite), \eqref{est BB N} can be made less than $\rho_N$.
\end{proof}

\begin{theorem}\label{lemma: existence of h diff}
Let $\vecomega\in\YY$.
If $v\in\vf^\omega(\Tt^d)$ generates a flow with rotation vector $\vecomega$ and $v$ is sufficiently close to constant, then there exists $h\in\Diff_0^\omega(\Tt^d)$ such that 
\begin{equation}\label{equivalence vf and omega}
h^*(v)=\vecomega.
\end{equation}
\end{theorem}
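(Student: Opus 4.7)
The plan is to reduce $v$ to an infinitely renormalizable vector field by a finite renormalization, apply the analytic conjugacy constructed in Section~\ref{section:Differentiable rigidity}, and then undo the finite normalization to obtain $h$.

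First, since $\vecomega\in\YY$, Theorem~\ref{thm class of vectors} supplies admissible sequences $(\sigma,\phi)$, and Theorem~\ref{thm small analyt radius} produces an analytic normalization whose net effect on $v$ is
\begin{equation*}
\widetilde X \;=\; \lambda_N \,\psi^*v,
\qquad
\psi \;=\; \fU_0 \circ T_1^{-1}\circ \fU_1\circ\cdots\circ \fU_{N-1}\circ T_N^{-1},
\end{equation*}
where $\widetilde X$ meets the hypotheses of Theorem~\ref{convergence Rn} in the strip of width $\rho_N$. By Lemma~\ref{properties rot} the rotation vector of $\widetilde X$ equals $\vecomega_N=\lambda_N P_N\vecomega$, and by the remark following \eqref{cdn on sigma n for YY} we have $\vecomega_N\in\YY$ for the sequences shifted by $N$; so the theory of Section~\ref{section:Differentiable rigidity} applies to $\widetilde X$.

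Next, Theorem~\ref{lemma conj} together with Theorem~\ref{thm C1 to analytic}, applied to $\widetilde X$ with the shifted sequences, yields an isotopic-to-identity analytic diffeomorphism $H=H(\widetilde X)\in\Diff_0^\omega(\Tt^d)$ with $H^*\widetilde X=\vecomega_N$. Linearity of the pullback in the vector field then gives $\lambda_N(\psi\circ H)^*v=\lambda_N P_N\vecomega$, i.e.\ $(\psi\circ H)^*v=P_N\vecomega$. Since $P_N\in\SLZ$ descends to a real-analytic diffeomorphism of $\Tt^d$ whose pullback sends the constant field $P_N\vecomega$ to $\vecomega$, setting $h=\psi\circ H\circ P_N$ gives $h^*v=\vecomega$.

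The main bookkeeping step is to verify that $h$ actually lies in $\Diff_0^\omega(\Tt^d)$ rather than merely being an analytic diffeomorphism of the universal cover. A telescoping computation using $W_n=P_n^{-1}\circ \fU_n\circ P_n$ and $T_n^{-1}=P_{n-1}P_n^{-1}$ rewrites $\psi=W_0\circ W_1\circ\cdots\circ W_{N-1}\circ P_N^{-1}=H_{0,N-1}\circ P_N^{-1}$, so that $h=H_{0,N-1}\circ(P_N^{-1}\circ H\circ P_N)$. Each $W_n$ is $\Zz^d$-periodic and isotopic to the identity, so $H_{0,N-1}$ is as well; conjugation by $P_N\in\SLZ$ preserves the class of $\Zz^d$-periodic isotopic-to-identity real-analytic diffeomorphisms; and $H\in\Diff_0^\omega(\Tt^d)$. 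Hence $h$ is a composition of such maps, giving $h\in\Diff_0^\omega(\Tt^d)$ and completing the proof.
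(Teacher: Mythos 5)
Your proof is correct and follows the same route as the paper: reduce to the large-strip case via Theorem \ref{thm small analyt radius}, apply the conjugacy construction of Section \ref{section:Differentiable rigidity} with shifted sequences, and undo the finite normalization. In fact you are more careful than the paper's very terse proof (which simply writes $h=H(\widetilde v)\bmod 1$): your explicit composition $h=\psi\circ H\circ P_N$ together with the telescoping identity $\psi=H_{0,N-1}\circ P_N^{-1}$ is precisely the bookkeeping needed to pass from $H^*\widetilde v=\vecomega_N$ to $h^*v=\vecomega$ and to verify $h\in\Diff_0^\omega(\Tt^d)$.
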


\begin{proof}
The lift to $\Rr^d$ of $v$ is assumed to have an analytic extension in $D_r$. Theorem \ref{thm small analyt radius} then gives $\widetilde v\in\A_\rho$ and, as long as
$v$ is close enough to constant, we have $\widetilde v\in \Delta$.
Then, by theorems \ref{thm C1 to analytic} and \ref{lemma conj}, the analytic diffeomorphism $h=H(\widetilde v)\bmod1$ verifies \eqref{equivalence vf and omega}.
\end{proof}


\section*{Acknowledgements} 

I would like to express my gratitude to Kostya Khanin and Jens Marklof for fruitful discussions.
The author was partially supported by Funda\c c\~ao para a Ci\^encia e a Tecnologia through the Program POCI 2010.


\bibliographystyle{plain} 
\bibliography{rfrncs}

\end{document}